\newcommand{\df}{\ensuremath{\partial}} 
\newcommand{\dfe}{\ensuremath{\partial^\varepsilon}} 
\newcommand{\dd}{\ensuremath{\delta}}           
\newcommand{\dde}{\ensuremath{\delta^\varepsilon}}
\newcommand{\aug}{\ensuremath{\varepsilon}}
\newcommand{\alg}{\ensuremath{\mathcal{A}}} 
\newcommand{\m}{\mathbf{m}}
\newcommand{\n}{\mathbf{n}}
\newcommand{\bmu}{\boldsymbol{\mu}}
\DeclareMathOperator{\img}{Im}
\DeclareMathOperator{\id}{Id}
\newcommand{\rr}{\ensuremath{\mathbb{R}}}
\newcommand{\zz}{\ensuremath{\mathbb{Z}}}
\theoremstyle{plain}
\newtheorem{thm}{Theorem}[section]
\newtheorem{cor}[thm]{Corollary}
\newtheorem{lem}[thm]{Lemma}
\newtheorem{prop}[thm]{Proposition}
\theoremstyle{definition}
\newtheorem{ex}[thm]{Example}
\theoremstyle{remark}
\newtheorem{rem}[thm]{Remark}
\numberwithin{equation}{section}
\def\dfn#1{{\em #1}}
\begin{document}

\title{Product Structures for Legendrian Contact Homology}

\author[G. Civan]{Gokhan Civan} \address{University of Maryland,
  College Park, MD 20742} \email{gcivan@math.umd.edu}

\author[J. Etynre]{John B. Etnyre} \address{Georgia Insitute of
  Technology, Atlanta, GA 30332} \email{etnyre@math.gatech.edu}

\author[P. Koprowski]{Paul Koprowski} \address{University of Maryland,
  College Park, MD 20742} \email{pkoprows@math.umd.edu}

\author[J. Sabloff]{Joshua M. Sabloff} \address{Haverford College,
  Haverford, PA 19041} \email{jsabloff@haverford.edu}

\author[A. Walker]{Alden Walker} \address{California Institute of
  Technology, Pasadena, CA 91125} \email{awalker@caltech.edu}

\begin{abstract}
  Legendrian contact homology (LCH) and its associated differential
  graded algebra are powerful non-classical invariants of Legendrian
  knots.  Linearization makes the LCH computationally tractable at the
  expense of discarding nonlinear (and noncommutative) information.
  To recover some of the nonlinear information while preserving
  computability, we introduce invariant cup and Massey products ---
  and, more generally, an $A_\infty$ structure --- on the linearized
  LCH.  We apply the products and $A_\infty$ structure in three ways:
  to find infinite families of Legendrian knots that are not isotopic
  to their Legendrian mirrors, to reinterpret the duality theorem of
  the fourth author in terms of the cup product, and to recover
  higher-order linearizations of the LCH.
\end{abstract}

\maketitle

\section{Introduction}
\label{sec:intro}

A central problem in the theory of Legendrian knots in the standard
contact $3$-space is to produce effective invariants and understand
their geometric meaning.  The first ``classical'' invariants of
Legendrian knots were the Thurston-Bennequin and rotation numbers
\cite{bennequin}.  These two invariants classify Legendrian knots in
the standard contact structure when the underlying smooth knot type is
the unknot \cite{yasha-fraser}, a torus knot, or the figure eight knot
\cite{etnyre-honda:knots}; see also \cite{ding-geiges:knots}.

These early results raised the question of whether non-isotopic
Legendrian knots with the same classical invariants exist.  A
particular instance of this question was Fuchs and Tabachnikov's
Legendrian mirror question \cite{f-t}: given a Legendrian knot $K$
with rotation number zero, is it isotopic to its image $\overline{K}$
under the contactomorphism $(x,y,z) \mapsto (x,-y,-z)$?  This map is
isotopic to the identity through diffeomorphisms but not
contactomorphisms (it changes the sign of the contact form).  New
invariants, beginning with Legendrian contact homology \cite{chv,
  yasha:icm} and followed by normal rulings \cite{chv-pushkar} and the
Knot Floer Homology Legendrian invariant \cite{ost:leg-trans}, have
been used to find non-isotopic Legendrian knots with the same
classical invariants.  In this paper, we study the algebraic structure
of the Legendrian contact homology differential graded algebra (DGA)
and how it can be used to define computable invariants of Legendrian
knots that are stronger that Chekanov's linearization and that detect
the geometric property of a Legendrian knot not being isotopic to its
Legendrian mirror.

The Legendrian contact homology of a Legendrian knot $K$ is the
homology of a free non-commutative DGA $(\alg_K,\df)$ over $\zz_2$
whose differential is nonlinear, so it is extremely hard to exploit
directly.  Several methods have been devised to extract useful
information from the DGA.  The most tractable of these is Chekanov's
method of linearization, which uses an ``augmentation'' $\aug: \alg_K
\to \zz_2$ to produce a finite-dimensional chain complex whose
homology is denoted $LCH^\aug_*(K)$ \cite{chv}.  The loss of
noncommutative structure, however, means that linearized homology is
unable to detect any differences between a Legendrian knot and its
mirror; this is also true of another easily computable invariant, a
normalized count of augmentations \cite{ns:augm-rulings}. Chekanov
also defined higher-order linearizations that take nonlinear parts of
the differential into account, but these have not yet proved to be any
more effective than the original (order one) linearization.  Still
another method, Ng's characteristic algebra, retains the nonlinear
structure of the DGA and can be used to distinguish a Legendrian $6_2$
knot from its mirror \cite{lenny:computable}, but its practical use is
more art than algorithm.

In this paper, we develop a new method of extracting nonlinear
information from the DGA, namely by defining cup and Massey product
structures --- and even $A_n$ and $A_\infty$ structures --- on the
linearized cohomology $LCH^*_\aug(K)$.  The cup product has already
appeared implicitly in the fourth author's investigation of duality
for the linearized contact homology \cite{duality}, and we reinterpret
duality in terms of the cup product in Section~\ref{ssec:duality}.
Though interesting structurally, the cup products that generate the
duality pairing are of no use as invariants.  There exist knots,
however, with nontrivial --- and noncommutative --- cup products that
do not contribute to the duality pairing.  In fact, all of the product
structures produce nontrivial invariants.

\begin{thm} 
  \label{thm:mir-ex}
  There exists an infinite family of knots that are distinguished from
  their Legendrian mirrors by their linearized cohomology rings.  More
  generally, for each $n >2$, there exists an infinite family of knots
  that are distinguished from their Legendrian mirrors by their
  $n^{th}$-order Massey products but not by their $k^{th}$ order
  Massey products for all $k < n$.
\end{thm}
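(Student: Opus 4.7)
The strategy is to split the theorem into two tasks: producing, for each $n \ge 2$, a single Legendrian knot $K_n$ that is distinguished from its mirror $\overline{K_n}$ at exactly the $n$th Massey product level, and then inflating each such example to an infinite family by a standard operation that preserves the distinguishing algebraic structure.

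For the base case $n = 2$ (the cup product statement), my plan is to compute the linearized cup product for a specific Legendrian knot $K_0$ with a small Lagrangian projection — a carefully chosen representative of a knot type such as $m(9_{45})$ or an analog of Ng's $6_2$. Concretely, I would enumerate the immersed polygons contributing to the $A_\infty$ operation $m_2$ and to the differential $m_1 = \dfe$, extract the induced product on $LCH^*_\aug(K_0)$, and verify by direct inspection that the resulting unital $\zz_2$-algebras attached to $K_0$ and $\overline{K_0}$ are non-isomorphic. Since these algebras are finite-dimensional over $\zz_2$, the distinction reduces to a finite check — for instance, comparing the rank of the Frobenius-type map $x \mapsto x \smile x$ or counting the dimension of the subspace generated by all products $x \smile y$.

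For the higher-order statement, the plan is to construct, for each $n > 2$, a Legendrian knot $K_n$ whose linearized $A_\infty$ algebra $(LCH^*_\aug(K_n), m_1, m_2, \dots)$ carries a nontrivial $m_n$ operation that distinguishes it from the corresponding structure on $\overline{K_n}$, while all lower operations behave symmetrically under the mirror so that every $k$-fold Massey product with $k < n$ agrees on the two sides. A natural model is an iterated satellite or tangle-insertion construction that layers $n$ copies of a basic ``cup product generating'' configuration in such a way that only compositions of exactly $n$ disks survive to contribute nontrivially. The infinite family is then produced by taking Legendrian connected sums $K_n \# T_m$ with Legendrian positive torus knots $T(2,2m+1)$ of maximal Thurston--Bennequin number and distinct smooth types. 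Under connected sum, the DGA admits a coproduct-type decomposition so that augmentations split, linearized cohomology splits as a tensor product, and the $A_\infty$ operations on the sum are governed by the tensor product of the operations on the factors; hence an order-$n$ Massey product distinguishing $K_n$ from its mirror survives on $K_n \# T_m$ and distinguishes it from $\overline{K_n} \# \overline{T_m}$.

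The main obstacle is the construction for $n > 2$. Finding a Legendrian knot whose $A_\infty$ structure realizes a nonzero higher operation $m_n$ without also producing unwanted lower brackets is geometrically subtle, since the holomorphic disks responsible for $m_n$ will typically degenerate into broken configurations that feed into $m_k$ for $k < n$. My plan is to control this either by exhibiting an explicit iterated geometric construction — for example a repeated, grading-shifted insertion pattern whose disk count can be enumerated by hand from the Lagrangian projection — or by arranging that the lower Massey products vanish for algebraic rather than geometric reasons, by choosing augmentations $\aug$ for which every potentially obstructing lower bracket vanishes automatically on $LCH^*_\aug$. Either route ultimately reduces the theorem to a finite combinatorial verification on a specific diagram, together with the formal behavior of the $A_\infty$ structure under connected sum.
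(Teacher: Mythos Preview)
Your strategy diverges from the paper's in two essential ways, and both of your replacements have genuine gaps.

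\textbf{The $n=2$ test.} The cohomology ring of $\overline{K}$ is the \emph{opposite} ring of that of $K$ (the mirror reverses the order of every word in the differential). Both invariants you propose to check --- the rank of the squaring map $x \mapsto x \smile x$ and the dimension of the span of all products --- are identical for a ring and its opposite, so they can never distinguish $K$ from $\overline{K}$. What is needed is an invariant sensitive to ``handedness,'' and the paper obtains this by arranging an asymmetric grading pattern: it builds an explicit family (parametrized by integers $k,l,m$) in which certain generators $a_i,b_i,c_i$ have distinct gradings, and then exhibits a nonzero cup product $LCH^{l-m-1}_\aug \otimes LCH^{m-k+1}_\aug \to LCH^{l-k+1}_\aug$ that has no counterpart in the mirror simply because the opposite product would live in gradings where no class exists.

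\textbf{The infinite family.} You propose to inflate a single example by connected sum with torus knots, asserting that augmentations split, linearized cohomology tensors, and the $A_\infty$ operations factor through the tensor. None of this is established in the paper, and the first assertion is already delicate: the DGA of a connected sum is not the coproduct of the DGAs (there are disks crossing the neck, and a new cusp generator), so there is no obvious bijection between augmentations of $K_n \# T_m$ and pairs of augmentations of the summands. Even granting a splitting for some augmentations, the invariant is a \emph{set} of quasi-isomorphism types over \emph{all} augmentations, and you would have to rule out accidental matches coming from augmentations that do not split. The paper sidesteps all of this by building the infinite family directly into the diagram: the integer parameters $k,l,m,\ldots$ shift gradings, and for infinitely many choices the knot has a \emph{unique} augmentation (only the degree-$0$ generators $x_i,y_i,z_i,\ldots$ can be augmented), so the set-of-rings invariant is a singleton and the comparison is clean.

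For the higher-order part the paper follows the same template: explicit front diagrams with grading parameters, a unique augmentation, and the key feature that all cup products and lower-order Massey products between the relevant cocycles vanish already at the cochain level, so the order-$n$ Massey product reduces to a single $m_n$ term whose grading asymmetry distinguishes $K$ from $\overline{K}$. Your satellite/tangle-insertion idea is in the right spirit but remains a plan; the paper's contribution is precisely the explicit diagrams that make the ``vanishing for algebraic rather than geometric reasons'' occur.
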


Further, the product structures incorporate all of the information (and
more) from Chekanov's higher-order linearizations.

\begin{thm} 
  \label{thm:a-infty-high-order}
  For all $n > 1$, the $A_n$ structure on $LCH^*_\aug(K)$ is
  strictly stronger than the order $n$ linearized contact
  (co)homology.
\end{thm}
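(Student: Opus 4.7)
The plan is to prove the theorem in two parts: containment, that the $A_n$ structure on $LCH^*_\aug(K)$ determines Chekanov's order-$n$ linearized contact (co)homology; and strictness, that this containment is proper.

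For the containment step I would pass through the homological transfer theorem for $A_\infty$-algebras. After shifting by the augmentation $\aug$, the DGA $(\alg_K, \df)$ is an $A_\infty$-algebra whose underlying complex is a free algebra on the augmented generators, and Chekanov's order-$n$ linearized complex is obtained by quotienting by the $(n+1)$-st power of the augmentation ideal --- equivalently, by truncating the bar construction at tensor-length $n$. The transfer theorem produces an $A_\infty$-structure $(m_1 = 0, m_2, m_3, \ldots)$ on the cohomology $LCH^*_\aug(K)$ quasi-isomorphic to the shifted DGA. Applying the truncated bar construction at length $n$ to this transferred structure yields a chain complex quasi-isomorphic to Chekanov's order-$n$ linearized complex, and this truncated complex depends only on the operations $m_1, \ldots, m_n$. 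Hence the $A_n$ data on $LCH^*_\aug(K)$ recovers the order-$n$ linearized (co)homology.

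For the strictness step I would invoke the families constructed in Theorem~\ref{thm:mir-ex}. For $n > 2$, take the family of knots $K_n$ distinguished from their Legendrian mirrors $\overline{K_n}$ by their $n^{th}$-order Massey products but by no lower-order Massey product. The plan is to build, inductively on the length filtration, a chain isomorphism from the order-$n$ linearized complex of $K_n$ to that of $\overline{K_n}$: lower-length matching is supplied by the vanishing of lower-order Massey product differences, and the remaining freedom at length $n$ is used to absorb the top-order discrepancy into a change of basis (which does not persist in the Massey product, precisely because the Massey product is a coset under exactly this freedom). For $n = 2$, one uses the first family in Theorem~\ref{thm:mir-ex}: those knots have different cup products on $LCH^*_\aug$ while still admitting an isomorphism of order-$2$ linearized complexes, verified by direct computation on the truncated DGA.

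The main obstacle is the strictness argument. The existence direction of strictness --- producing an invariant ($A_n$) that distinguishes the knots --- is already supplied by Theorem~\ref{thm:mir-ex}, but the nonexistence direction --- that Chekanov's order-$n$ linearization fails to distinguish them --- requires an explicit chain isomorphism of truncated complexes for each knot in the family. The natural induction on length paired with lower-Massey vanishing requires careful bookkeeping of the change-of-basis at each filtration stage, and would ideally rest on a general lemma asserting that Chekanov's order-$n$ linearized cohomology depends only on the quasi-isomorphism class of the truncated bar complex, so that length-$n$ Massey-product triviality translates into an honest chain isomorphism. I expect this structural lemma, rather than any single example computation, to carry the technical weight of the proof.
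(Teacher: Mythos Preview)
Your containment argument is essentially the paper's: identify Chekanov's order-$n$ complex with the length-$n$ truncated bar (tilde) construction of the $A_n$ structure on $A^*$, transfer to $LCH^*_\aug$ via the Minimal Model Theorem, and observe that $A_n$ morphisms and homotopies induce chain maps and homotopies on the tilde construction.  That part is fine.

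Your strictness argument, however, misses the paper's central observation and replaces it with a plan that is both harder and unclear.  The paper does not build a chain isomorphism between the order-$n$ complexes of $K_n$ and $\overline{K_n}$ by an induction keyed to Massey-product vanishing.  Instead it notes a completely general fact: for \emph{any} Legendrian knot $K$, the reflection map
\[
\tau(a_1\otimes\cdots\otimes a_k)=a_k\otimes\cdots\otimes a_1
\]
is a chain isomorphism $(A_{(n)},\df^\aug_{(n)})\to(\overline{A}_{(n)},\overline{\df}^{\,\aug}_{(n)})$, because the DGA for $\overline{K}$ is obtained from that for $K$ by reversing the order of every word in the differential.  Hence the order-$n$ linearized (co)homology \emph{never} distinguishes a knot from its mirror, and strictness follows immediately once Theorem~\ref{thm:mir-ex} supplies knots whose $A_n$ structure does.

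By contrast, your proposed induction is problematic on its own terms.  You want the vanishing of the lower-order Massey products to build the isomorphism through length $n-1$, and then to ``absorb the top-order discrepancy into a change of basis'' at length $n$.  But the $n^{\text{th}}$-order Massey products of $K_n$ and $\overline{K_n}$ genuinely differ, so the $A_n$ structures on $LCH^*_\aug$ are not isomorphic; there is no discrepancy to absorb at the level of the minimal model.  What makes the order-$n$ linearized cohomology nonetheless coincide is not any Massey-product cancellation but the extra reflection symmetry of the tilde construction---a symmetry that is invisible from the $A_n$ data on cohomology alone.  Your structural lemma as stated (``length-$n$ Massey triviality translates into an honest chain isomorphism'') would in fact be false for these examples, since the length-$n$ Massey products are nontrivial and different.
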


Finally, we can reinterpret a result of the fourth coauthor
\cite{duality} in terms of the cup product operation.

\begin{thm} \label{thm:cup-duality} For every Legendrian knot $K$ in
  the standard tight contact structure on $\rr^3$ and every
  augmentation $\aug$ of its contact homology DGA, there is an element
  $\kappa \in LCH^\aug_1(K)$ and an element $c\in LCH^1_\aug(K)$ such
  that $\langle c, \kappa \rangle = 1$ and the pairing
  \[
  \overline{LCH}_\aug^k\otimes \overline{LCH}_\aug^{-k} \to \zz_2: [a]
  \otimes [b] \mapsto \langle [a]\cup[b], \kappa \rangle
  \]
  is symmetric and non-degenerate, where $\overline{LCH}_\aug^*$ is a
  complement of the span of $c$.
\end{thm}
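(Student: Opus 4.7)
The plan is to leverage the duality theorem of the fourth author \cite{duality}, which establishes a non-degenerate pairing between $LCH^\aug_k(K)$ and $LCH_\aug^{-k}(K)$ modulo a distinguished ``fundamental class,'' and to reinterpret that pairing in terms of the cup product on linearized cohomology.

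First, I would fix the distinguished classes $\kappa$ and $c$. Geometrically, $\kappa \in LCH^\aug_1(K)$ is represented by the Reeb chord lying above a chosen base point on $K$; in \cite{duality} this chord is identified as the cycle whose class controls the failure of fully symmetric duality and distinguishes $LCH^\aug_1(K)$ from its dual. I would then let $c \in LCH^1_\aug(K)$ be any cocycle with $\langle c, \kappa \rangle = 1$. Such a $c$ exists because $\kappa$ is nontrivial and $LCH^*_\aug(K)$ is the linear dual of $LCH^\aug_*(K)$ in the setting of finite-dimensional $\zz_2$-vector spaces; an explicit representative can be read off from the chain-level evaluation.

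Next, I would show that the bilinear form $([a],[b]) \mapsto \langle [a]\cup[b], \kappa\rangle$ coincides with the duality pairing of \cite{duality}. On the chain level, the cup product $m_2(a,b)$ is defined as a count of pseudo-holomorphic disks with two positive punctures at chords of $a$ and $b$, one negative output puncture, and remaining negative punctures filled in by $\aug$. Evaluating $[a]\cup[b]$ against $\kappa$ restricts to those disks whose output chord is the base-point chord defining $\kappa$; this is precisely the configuration that underlies the chain-level map appearing in the duality pairing of \cite{duality}, up to a cyclic reordering of the three distinguished boundary punctures. Non-degeneracy on the complement $\overline{LCH}_\aug^*$ then follows directly from the non-degeneracy portion of the duality theorem. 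Symmetry should follow from the cyclic symmetry of the moduli space of disks with three distinguished boundary chords: rotating the roles of $a$, $b$, and $\kappa$ interchanges the two ``inputs'' while preserving the role of $\kappa$, and working over $\zz_2$ removes any sign obstruction.

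The main obstacle I expect is the identification of moduli spaces: the pairing in \cite{duality} is packaged as an exchange-type chain map and is not manifestly presented as a count of disks with three distinguished chords. Translating that chain map into the disk count that defines $m_2$ and then evaluating against $\kappa$ requires careful bookkeeping of punctures and of how augmented punctures interact with the base-point chord. Once this identification is made explicit, both the agreement with the duality pairing and the cyclic symmetry become transparent, and the conclusions of the theorem follow.
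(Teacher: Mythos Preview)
Your overall strategy---fix $\kappa$ and $c$ coming from the duality theorem, identify the cup-product pairing with the chain-level pairing of \cite{duality} via a disk count, and deduce non-degeneracy from that theorem---is exactly the route the paper takes. However, there is a genuine gap in your symmetry argument and a sign confusion worth flagging.

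First, a minor but confusing reversal: in this paper $m_2$ is the adjoint of $\dfe_2:A\to A^{\otimes 2}$, so the disks being counted have \emph{one} positive corner (at the output) and \emph{two} negative corners (at the inputs $a$ and $b$), together with further augmented negative corners. You have the roles of positive and negative swapped. This matters because when you evaluate on $\kappa$ you are picking out disks whose \emph{positive} corner sits at a chord $r$ with $\langle r,\kappa\rangle=1$, exactly the configuration the paper uses.

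The real gap is in symmetry. Cyclic rotation of three boundary punctures on a disk does \emph{not} fix one of them while swapping the other two; only a reflection does that, and reflection reverses the boundary orientation. Thus the moduli space with corners $(a,b,r)$ in counterclockwise order is not identified with the space with corners $(b,a,r)$ in that order by any cyclic move, and your argument does not give $\langle[a]\cup[b],\kappa\rangle=\langle[b]\cup[a],\kappa\rangle$. The paper handles this differently: it defines \emph{two} cap-product chain maps $\phi$ and $\hat\phi$, corresponding to the two cyclic orderings $(p,q',r)$ and $(p,r,q')$, and observes that each of them, on homology, is an inverse to the duality isomorphism $\eta_*$ of \cite{duality}. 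Hence $\phi_*=\hat\phi_*$ on $\overline{LCH}^*_\aug$, and unwinding gives
\[
\langle[p]\cup[p'],\kappa\rangle=\langle[p'],\phi_*[p]\rangle=\langle[p'],\hat\phi_*[p]\rangle=\langle[p']\cup[p],\kappa\rangle.
\]
So symmetry is not a direct geometric symmetry of a single moduli space but rather a consequence of the uniqueness of the inverse to $\eta_*$. Your sketch would need this ingredient (or an equivalent one) to close the argument.
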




We leave as open problems whether or not the Massey product structure
on $LCH^*_\aug(K)$ determines the third order linearized contact
(co)homology and whether or not the higher order linearized contact
(co)homologies are stronger invariants than the (first order)
linearized contact (co)homology.

The rest of the paper is organized as follows: after reviewing some
basic definitions in Section~\ref{sec:background}, we define the
$A_\infty$ and product structures in Section~\ref{sec:prod}.  We show
that the product structures are effective invariants in
Section~\ref{sec:invariants} by proving Theorem~\ref{thm:mir-ex}.  We
also establish Theorem~\ref{thm:cup-duality} in this section.
Finally, we prove Theorem~\ref{thm:a-infty-high-order} in
Section~\ref{sec:higher-order}.

{\em Acknowledgements}: The authors thank Jim Stasheff for several
helpful discussions. The first author was supported by an REU  program funded by the
Georgia Institute of Technology. The second author was partially supported by NSF grants
DMS-0804820.  The third and fifth authors were supported as
undergraduate summer research students by the Haverford College
faculty support fund.

\section{Background and Notation}
\label{sec:background}


We refer the reader to the survey article \cite{etnyre:knot-intro} or
the first chapter of the text \cite{lecnotes} for the basic notions of
Legendrian knot theory.  In particular, we assume that the reader is
familiar with the Lagrangian ($xy$) and front ($xz$) projections (and
the resolution procedure that generates a Lagrangian projection from a
front projection by smoothing left cusps, turning right cusps into
loops, and making all crossings be of the form in
Figure~\ref{fig:reeb-and-corner}) of a Legendrian knot in the standard
contact $(\rr^3, \xi_0 = \ker \alpha_0)$.

\subsection{Legendrian Contact Homology}
\label{ssec:lch}

In this section, we sketch the definition of Legendrian contact
homology, which is the homology of the Chekanov-Eliashberg
differential graded algebra (DGA).  See Chekanov's original paper
\cite{chv}, the paper \cite{ens}, or the expository works
\cite{etnyre:knot-intro, lecnotes} for more details.  

To define the Chekanov-Eliashberg DGA $(\alg_K, \df)$ of a Legendrian
knot $K$, we begin with the underlying algebra $\alg_K$.  Number the
crossings of the Lagrangian projection of $K$ from $1$ to $n$, and let
$A$ be the vector space over $\zz_2$ generated by the labels $\{q_1,
\ldots, q_n\}$.  Define the algebra $\alg_K$ to be the unital tensor
algebra over $A$, i.e.,
\begin{equation}
  \alg_K =  \bigoplus_{k=0}^\infty A^{\otimes k}.
\end{equation}
We sometimes denote $\alg_K$ as $\alg(q_1,\ldots, q_n)$ when we want to
emphasize the generating set for the algebra.

The generators $q_i$ are graded by a Conley-Zehnder index that takes
values in $\zz_{2r(K)}$; the grading then extends naturally to all of
$\alg_K$.  Specifically, assume that at all crossings of $\pi_l(K)$,
the strands meet orthogonally. Given a generator $q_i$, choose a path
$\gamma_i$ inside $\pi_l(K)$ that starts on the overcrossing at $i$
and ends at the undercrossing.  Then define the grading $|q_i|$ to be:
\begin{equation}
  |q_i| \equiv 2r(\gamma_i) - \frac{1}{2}  \mod 2r(K).
\end{equation}

Finally, we need to define the differential $\df$ on the generators of
$\alg_K$; it extends to the full algebra via linearity and the Liebniz
rule.  First, decorate the sectors near every crossing of $\pi_l(K)$
with positive and negative signs --- called Reeb signs --- as in
Figure~\ref{fig:reeb-and-corner}(a).  To find $\df q_i$, let
$\Delta(q_i)$ be the set of immersed disks (modulo smooth
reparametrization) whose boundary lies in $\pi_l(K)$.  Further
stipulate that the disks have convex corners (see
Figure~\ref{fig:reeb-and-corner}(b)) such that the corner covers a
positive Reeb sign at the crossing $i$ and negative Reeb signs at all
other corners (it is possible that there are no other corners).
Finally, each disk in $\Delta(q_i)$ contributes a term to $\df q_i$
consisting of the product of the generators associated to its negative
corners, taken in counterclockwise order starting after $i$. Note that
the DGA $(\alg_{\overline{K}},\df)$ for the Legendrian mirror
$\overline{K}$ has the same generators as those for $K$, but the order
of each term in the differential is reversed.

\begin{figure}
  \relabelbox \small {
    \centerline {\epsfbox{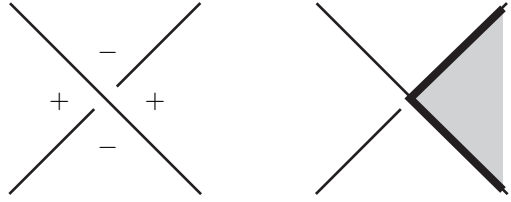}}} 
  \relabel{1}{$-$} 
  \relabel {2}{$+$} 
  \relabel{3}{$+$} 
  \relabel {4}{$-$}
  \endrelabelbox
  \caption{(a) The Reeb signs near a crossing of $\pi_l(K)$. (b) A
    convex corner.}
  \label{fig:reeb-and-corner}
\end{figure}

That this definition produces an invariant of Legendrian knots was
proven by Chekanov.

\begin{thm}[Chekanov \cite{chv}] \label{thm:dga} The differential
  $\df$ has degree $-1$ and satisfies $\df^2=0$.  The Legendrian
  contact homology $H_*(\alg_K,\df)$ is invariant under Legendrian
  isotopy.
\end{thm}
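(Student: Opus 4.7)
The plan is to establish the three claims—degree $-1$, $\df^2 = 0$, and Legendrian isotopy invariance—by analyzing the combinatorics of the admissible immersed disks on the Lagrangian projection, with isotopy invariance reduced to checking the Legendrian Reidemeister moves one at a time.

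For the degree statement, I would fix an admissible disk $D$ contributing to $\df q_i$, with positive corner at $q_i$ and negative corners at $q_{j_1}, \ldots, q_{j_k}$, and track the Maslov-type data around $\partial D$. Concatenating $\partial D$ with the chosen Conley--Zehnder paths $\gamma_i, \gamma_{j_1}, \ldots, \gamma_{j_k}$ produces a closed loop whose total rotation I would compute directly using the convex corner condition: a positive corner and a negative corner contribute quarter-turns of opposite sign, and the disk itself contributes one extra half-turn. Summing these contributions and reducing modulo $2r(K)$ yields $|q_i| - \sum_\ell |q_{j_\ell}| \equiv 1$, so $\df$ lowers degree by one.

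For $\df^2 = 0$, each term in $\df^2 q_i$ corresponds to a pair $(D_1, D_2)$ of admissible disks, where the positive corner of $D_2$ sits at a generator $q_j$ appearing as a negative corner of $D_1$. I would view such a broken pair as a boundary point of a one-parameter family of immersed disks with one positive corner at $q_i$, all other corners negative, and one additional boundary point that is a non-convex (node-like) degeneration. The standard boundary-of-moduli-space argument shows that each such family has exactly two broken ends, so the terms cancel in pairs over $\zz_2$. Combinatorially this is a bookkeeping exercise enumerating the two ways a near-broken configuration can resolve; the holomorphic-disk interpretation gives the same conclusion via Gromov compactness and gluing.

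For invariance, a generic Legendrian isotopy factors through planar isotopies together with a finite sequence of Reidemeister moves on the Lagrangian projection; the Legendrian condition rules out generic RI moves, leaving only RII and RIII. An RII move introduces a pair of generators $a,b$ with $|a| = |b|+1$ and $\df a = b + (\text{terms in old generators})$; a tame change of variables absorbing these extra terms makes this an algebraic stabilization, which preserves the homology of $(\alg_K,\df)$. An RIII move I would handle by exhibiting an explicit tame algebra automorphism matching the admissible disks before and after: each disk passing through a neighborhood of the triple point corresponds, under a generator-by-generator substitution, to a disk on the other side. I expect this RIII check to be the main obstacle, since the enumeration of disks near the triple point and the verification that the substitution intertwines the two differentials requires delicate accounting of nonlinear correction terms. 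A cleaner but heavier alternative is to work directly in the symplectization, where invariance follows from a bifurcation analysis of the holomorphic disk moduli spaces.
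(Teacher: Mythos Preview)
The paper does not prove this theorem: it is quoted as a result of Chekanov \cite{chv} and serves purely as background. Immediately after the statement the paper only records the sharper fact that the \emph{stable tame isomorphism class} of $(\alg_K,\df)$ is invariant, and then recalls the definitions of elementary/tame isomorphisms and stabilization because those notions are needed later in the invariance proof for the $A_\infty$ structure (Theorem~\ref{thm:a-infty}). There is no argument in this paper for you to compare against.

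That said, your outline is essentially Chekanov's own proof in \cite{chv}: the rotation-number computation around $\partial D$ for the grading, the obtuse-corner pairing for $\df^2=0$ (your ``two ways a near-broken configuration can resolve'' is exactly Chekanov's combinatorial mechanism), and the Lagrangian Reidemeister analysis for invariance, with RII producing a stabilization after a tame change of variables and RIII handled by an explicit tame isomorphism. One point worth emphasizing if you flesh this out: Chekanov's conclusion is the stable tame isomorphism statement, not merely invariance of $H_*(\alg_K,\df)$, and it is the former that the present paper actually uses (see Remark~\ref{rem:lin-pf} and the proof of Theorem~\ref{thm:a-infty}). So you would want to state that stronger conclusion explicitly rather than stopping at the homology.
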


In fact, Chekanov proved something more subtle: the ``stable tame
isomorphism'' class of $(\alg_K, \df)$ is an invariant.  
We recall the definition of stable tame isomorphism as it will be used below.

  A graded chain isomorphism
  $$\phi: \alg(q_1, ..., q_n) \longrightarrow \alg(q'_1,\ldots, q'_n)$$
  is {elementary} if there is some $j \in \{1, \ldots, n\}$
  such that
  \begin{equation} 
    \phi(q_i) = 
    \begin{cases}
      q'_i, & i \neq j \\
      q'_j + u, & i=j \text{ where }u \in
      \alg(q'_1,\ldots,q'_{j-1},q'_{j+1},\ldots,q'_n).
    \end{cases}
  \end{equation}
A composition of elementary isomorphisms is called {tame}.
The degree $i$-{stabilization} $S_i(\alg(q_1, \ldots, q_n))$
  of $\alg(q_1, \ldots, q_n)$ is defined to be $\alg(q_1, \ldots, q_n,
  e^i_1, e^i_2)$.  The grading and the differential are inherited from
the original algebra and by setting  $|e_1| = i$, $|e_2| = i-1$, $\df e_1 = e_2$,
and $\df e_2 = 0$.

  Two differential algebras $(\alg, \df)$ and $(\alg', \df')$ are 
  {stably tame isomorphic} if after each algebra has been stabilized some number 
  of times they become  tame isomorphic by a map that is also a chain map.


\subsection{Linearized Contact Homology and Cohomology}
\label{ssec:lin}

As it stands, it is difficult to use Legendrian contact homology in
practical applications, as it is the homology of an infinite
dimensional noncommutative algebra with a nonlinear differential.  To
find a more amenable invariant, we use Chekanov's linearization
technique.  To do this, we break up the differential on $A$ into its
components:
\begin{equation}
  \df_k : A \to A^{\otimes k}.
\end{equation}
If it were true that the constant term of the differential vanished,
i.e. if $\df_0 = 0$, then the fact that $\df^2 = 0$ would imply that
$\df_1^2 = 0$.  In particular, if $\df_0 = 0$, then $(A, \df_1)$ is a
finite-dimensional chain complex with easily computable homology.

It is rarely true, however, that $\df_0 = 0$.  To remedy this
situation, consider graded algebra maps $\aug: \alg_K \to
\zz_2$ that satisfy:
\begin{enumerate}
\item $\aug(1) = 1$, and
\item $\aug \circ \df = 0$.
\end{enumerate}
These maps are called \dfn{augmentations}.  They do not always
exist --- see, for example, \cite{fuchs-ishk, rutherford:kauffman,
  rulings} --- but when they do, they allow us to linearize the
Chekanov-Eliashberg DGA.  To see how, consider the graded isomorphism
$\phi^\aug: \alg_K \to \alg_K$ defined by $\phi^\aug (q_i) = q_i +
\aug(q_i)$.  This map defines a new differential $\dfe = \phi^\aug \df
(\phi^\aug)^{-1}$; it is easy to check that $\dfe_0 = 0$.  Thus, for
each augmentation $\aug$ of $(\alg_K, \df)$, there is a chain complex
$(A, \dfe_1)$.  This is called the \dfn{linearized chain complex
  with respect to \aug}.  There is also a cochain complex $(A^*,
\dde)$, where $A^*$ has a basis $\{p_1, \ldots, p_n\}$ that is dual
to $\{q_1, \ldots, q_n\}$ and $\dde$ is the adjoint of $\dfe_1$.
The homologies of these complexes are called the \dfn{linearized
  contact (co)homologies} and are denoted by $LCH^\aug_*(K)$ and
$LCH_\aug^*(K)$.

Chekanov extended the definition of the linearized (co)chain complex
to include higher-order pieces of the differential.  The
$n^{\text{th}}$\dfn{-order linearized chain complex with respect to
  \aug} is given by the graded vector space of chains
\[A_{(n)} = \bigoplus_{i=1}^\infty A^{\otimes i} /
\bigoplus_{i=n+1}^\infty A^{\otimes i}\] together with the
differential $\dfe_{(n)}$ induced from the quotient. Notice that
$\dfe$ does indeed descend to the quotient since $\dfe_0=0$, so $\dfe$
cannot decrease the length of a tensor. The $n^{\text{th}}$-order
cochain complex is defined by taking duals and adjoints, as usual. The
homologies of these complexes are called the
$n^{\text{th}}$\dfn{-order linearized contact (co)homologies} and are
denoted by $LCH^\aug_*(K,n)$ and $LCH_\aug^*(K,n)$.

Chekanov proved that the set of all linearized (co)homologies taken
over all possible augmentations is a Legendrian knot invariant; this
set is called the \dfn{linearized (co)homology invariant} of $K$.
Invariance also holds for the higher order linearized homologies.

\begin{rem} \label{rem:lin-pf}
  The proof relies on two facts that were proved in \cite{chv}: first,
  the linearized invariant does not change under stabilizations of the
  Chekanov-Eliashberg DGA.  Second, given a tame isomorphism $\psi:
  (\alg, \df) \to (\alg', \df')$ and an augmentation $\aug'$ of
  $(\alg', \df')$, the composite map $\phi^{\aug'} \psi$ factors as
  $\overline{\psi} \phi^\aug$, where $\aug$ is an augmentation of
  $\alg$ and $\overline{\psi}$ does not reduce the lengths of words in
  $\alg$.  The map $\overline{\psi}$ is a DGA isomorphism between
  $(\alg, \df^\aug)$ and $(\alg', (\df')^{\aug'})$, and hence
  restricts to an isomorphism between the linearized complexes $(A,
  \df^\aug_1)$ and $(A', (\df')^{\aug'}_1)$.
\end{rem}

\section{$A_\infty$-algebras and Product Structures}
\label{sec:prod}

As mentioned in the introduction, invariant product structures can be
defined on the linearized cohomology invariant by using higher-order
terms in the differential $\df$.  In fact, we shall see that the
linearized cochain complex carries the structure of an $A_\infty$
algebra, and that the $A_\infty$ structure on the cochain complex
induces an invariant $A_\infty$ structure on the linearized contact
cohomology.

\subsection{$A_\infty$ Algebras and Massey Products}
\label{ssec:ainfty}

An \dfn{$A_n$-algebra} over $\zz_2$ is a graded vector space $V$ over
$\zz_2$ together with a sequence of graded maps $\m = \{m_k:V^{\otimes
  k}\to V\}_{1 \leq k \leq n}$ of degree $1$ satisfying:
\begin{equation}\label{main-a-infty}
  \sum_{i+j+k=l} m_{i+1+k}\circ (1^{\otimes i}\otimes 
  m_j\otimes 1^{\otimes k})=0
\end{equation}
for all $1 \leq l \leq n$. An \dfn{$A_\infty$-algebra} is the obvious
generalization to an infinite sequence of maps.  An $A_\infty$-algebra
structure induces $A_n$ structures for all $n \geq 1$. Notice that
Equation~\eqref{main-a-infty} for $l=1$ is $m_1\circ m_1=0,$ which
implies that $m_1$ is a co-differential on $V$.  From now on, we
denote it by $\delta$. The cohomology of $(V,\delta)$ is denoted
$H^*(V).$ When we take $l=2$ in Equation~\eqref{main-a-infty}, we get:
\[
\delta m_2(a,b) = m_2(\delta a, b)+m_2(a,\delta b)
\]
for all $a,b\in V.$ Thus, $m_2$ descends to a well defined product
$\mu_2$ on $H^*(V)$. We see this product is associative using
Equation~\eqref{main-a-infty} when $l=3$:
\begin{equation} \label{eqn:m3}
  \begin{aligned}
    m_2(a,m_2(b,c))&+m_2(m_2(a,b),c)=\delta m_3(a,b,c)\\ & +m_3(\delta
    a, b,c)+m_3(a,\delta b, c)+ m_3(a,b,\delta c).
  \end{aligned}
\end{equation}
Thus, given an $A_\infty$ algebra $(V, \m)$, we obtain an ordinary
associative algebra $(H^*(V), \mu_2)$.

\begin{rem}
  Usually, the $A_\infty$ algebra map $m_k$ is taken to have degree
  $2-k$ instead of degree $1$. Our maps $m_k$ should be thought of as
  degree $1$ maps on the suspension $A^* = SV$ induced by degree $2-k$
  maps $\widetilde{m}_k$ defining a conventionally-graded $A_\infty$
  algebra $(V, \widetilde{\bf{m}})$.  Similar comments apply to the
  definition of $A_\infty$ morphisms, which are usually taken to have
  degree $n-1$ instead of degree $0$.
\end{rem}

If we try to define a full $A_\infty$ structure on $H^*(V)$ by simply
letting the maps $m_k$ descend to cohomology, we run into trouble
already at $k=3$, as Equation~\eqref{eqn:m3} shows that $m_3(a,b,c)$
is not necessarily a cycle even if $a$, $b$, and $c$ are.  We can
proceed in one of two ways: first, following Stasheff
\cite{stasheff:h-space-2}, we can (partially) define a triple product
on $H^*(V)$ as follows: given $[a],[b],[c]\in H^*(V)$, suppose that
$\mu_2([a],[b])=[0]=\mu_2([b],[c])$. Let $\delta x = m_2(a,b)$ and let
$\delta y = m_2 (b,c)$. Then we see that \[m_3(a,b,c) + m_2(a,y) +
m_2(x,c)\] is a cocycle.  Since $x$ and $y$ are only defined up to the
addition of cocycles, we get a well-defined element
\[ \{[a],[b],[c]\} \in \tilde{H}^*(V) = \frac{H^*(V)}{\img
  (\mu_2([a],\cdot))+\img(\mu_2(\cdot,[c]))}.
\]
This triple product is called a \dfn{Massey product}. 

It is possible to inductively define higher-order Massey products on
$H^*(V)$ using the $A_\infty$ structure. Given $[a_1], \ldots, [a_n]
\in H^*(V)$, suppose that the product $\{[a_i], \ldots, [a_j]\}$ is
defined and equal to zero modulo the successive images of all
lower-order Massey products for all $1 \leq i < j \leq n$.  Following
the order $3$ case in \cite{stasheff:h-space-2}, we define:
\[
\{[a_1], \ldots, [a_n]\}= \left[\sum_{k=2}^n \sum_{0 \leq i_1 < \cdots
    <
    i_{k-1} <
  n} m_k(b_{1,i_1}, b_{i_1+1,i_2}, \ldots, b_{i_{k-1}+1,n}) \right],
\]
where $b_{lm} \in V$ has been inductively defined by:
\begin{align*}
  [b_{mm}] &= [a_m], \\
  \delta b_{lm} &= \sum_{k=2}^{m-l+1} \sum_{l \leq i_1 < \cdots < i_{k-1} <
    m} m_k(b_{l,i_1}, b_{i_1+1,i_2}, \ldots, b_{i_{k-1}+1,m}).
\end{align*}
It is straightforward but tedious to check using the definition of the
$b_{km}$ and the defining $A_\infty$ equation \eqref{main-a-infty}
that the higher-order Massey product is indeed a cocycle and is
well-defined modulo the successive images of the lower-order Massey
products in $H^*(V)$.

The Massey products have the practical advantage of computability, as
we shall see, but the theoretical disadvantage of being only partially
defined.  The second way forward is to try to define a full $A_\infty$
structure on $H^*(V)$.  To do this, we need the notion of a
\dfn{morphism} of $A_\infty$-algebras; there are obvious analogs for
$A_n$ algebras, and any $A_\infty$ morphism induces $A_n$ morphisms
for all $n \geq 1$. An $A_\infty$ morphism $\boldsymbol{\phi}:(V, \m)
\to (W, \n)$ is a collection of degree $0$ linear maps
$\phi_n:V^{\otimes n}\to W$ that satisfy
\begin{equation}\label{a-infty-morphism}
  \sum_{i+j+k=n} \phi_{i+1+k}\circ(1^{\otimes i}\otimes m_j\otimes
  1^{\otimes k})
  =\sum_{\substack{1\leq r\leq n \\ i_1+\ldots+i_r=n}} n_r\circ 
  (\phi_{i_1}\otimes\cdots\otimes\phi_{i_r}).
\end{equation}
Notice that this equation implies that $\phi_1:V\to W$ commutes with
the codifferentials on $V$ and $W$, and hence induces a map on
cohomology. The morphism $\boldsymbol{\phi}$ is called an
\dfn{$A_\infty$ quasi-isomoprhism} if $\phi_1$ induces an isomorphism
on the cohomology.

Equation~\eqref{a-infty-morphism} for $n=2$ says that 
\[
\phi_1\circ m_2 + \phi_2(\dd\otimes 1 + 1\otimes \dd)=
n_2\circ(\phi_1\times \phi_1) + \dd\circ \phi_2.
\]
Thus, on the level of cohomology,
\[
\phi_1 m_2([a],[b])=m_2(\phi_1[a], \phi_1[b]).
\]
In other words, $\phi_1$ preserves the product structure on
cohomology.  One may easily check that
Equation~\eqref{a-infty-morphism} for $n>2$ implies that $\phi_1$ will
preserve the Massey product and higher order product structures on the
cohomology as well.

We now return to the discussion of defining an $A_\infty$ structure on
$H^*(V)$.  The relevant result is the Minimal Model Theorem, which we
shall discuss in more detail in Section~\ref{sec:higher-order}.

\begin{thm}[Kadeishvili \cite{kadeishvili}] \label{thm:kad} If $(V,
  \m)$ is an $A_\infty$ algebra over a field, then its homology
  $H^*(V)$ also possesses an $A_\infty$ structure $\bmu$ such that
  $\mu_1 = 0$, $\mu_2$ is induced from $m_2$ as described above, and
  there exists an $A_\infty$ quasi-isomorphism $\boldsymbol{\phi}:
  (H^*(V), \bmu) \to (V, \m)$.  The $A_\infty$ structure on $H^*(V)$
  is unique up to $A_\infty$ quasi-isomorphism.
\end{thm}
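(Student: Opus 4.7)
The plan is to prove existence by an inductive obstruction-theoretic construction of the $\mu_n$ and $\phi_n$, with uniqueness following from a similar inductive argument. Because we work over a field, the short exact sequence $0 \to \img m_1 \to \ker m_1 \to H^*(V) \to 0$ splits; pick a linear section $s : H^*(V) \to \ker m_1 \subset V$ and a linear retraction, and set $\phi_1 := s$ and $\mu_1 := 0$. Since $s$ sends every class to a cocycle representative, $\phi_1$ is a chain map inducing the identity on cohomology, so the resulting $\boldsymbol{\phi}$ will automatically be a quasi-isomorphism. Define $\mu_2([a],[b]) := [m_2(s[a], s[b])]$, which is well defined as discussed above, and choose $\phi_2$ to be a chain nullhomotopy of $\phi_1\mu_2 - m_2(\phi_1\otimes \phi_1)$, as provided by the splitting.

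Inductively, suppose $\mu_j$ and $\phi_j$ have been constructed for $j < n$ so that Equations \eqref{main-a-infty} and \eqref{a-infty-morphism} hold at all levels less than $n$. Isolate the terms $\phi_1\mu_n$ and $m_1\phi_n$ in \eqref{a-infty-morphism} at level $n$ and call the remaining expression --- built from $m_j$, $\mu_j$, $\phi_j$ with $j < n$ --- the obstruction $U_n : H^*(V)^{\otimes n} \to V$. If $U_n$ lands in $\ker m_1$, then define $\mu_n(a_1,\ldots,a_n) := [U_n(a_1,\ldots,a_n)] \in H^*(V)$ and choose $\phi_n$ so that $m_1\phi_n = U_n - s\mu_n$; both constructions are possible using the splitting.

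The main obstacle is verifying that $m_1 U_n = 0$. I plan to prove this by direct computation: apply $m_1$ to the definition of $U_n$, use the defining $A_\infty$ relation \eqref{main-a-infty} for $(V,\m)$ to replace compositions $m_1\circ m_r$, and substitute the inductive morphism relations at levels less than $n$ to simplify every appearance of $m_1\phi_j$. The planar-tree combinatorics underlying \eqref{main-a-infty} and \eqref{a-infty-morphism} arrange for all terms to cancel in pairs, except for a residual summand that is precisely $\phi_1$ composed with the $A_\infty$ relation \eqref{main-a-infty} for $\bmu$ at level $n$; because $\phi_1 = s$ is injective, this residual summand vanishes if and only if $\bmu$ satisfies its own $A_\infty$ relation at level $n$, which is thereby verified simultaneously.

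For uniqueness, given two $A_\infty$ structures $\bmu$ and $\bmu'$ on $H^*(V)$ with accompanying quasi-isomorphisms to $(V,\m)$, I would construct an intertwining $A_\infty$ quasi-isomorphism $(H^*(V),\bmu)\to(H^*(V),\bmu')$ inductively by the same obstruction method, starting from the identity at the linear level, with the key simplification that the target $H^*(V)$ has trivial codifferential so that each obstruction is already a cocycle of a zero-differential complex and has an immediate solution. The same combinatorial cancellation used in the existence proof guarantees that every obstruction vanishes identically, and since each stage is unique only up to an $A_\infty$ homotopy, the resulting quasi-isomorphism is well defined up to $A_\infty$ homotopy.
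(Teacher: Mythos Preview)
Your existence argument is Kadeishvili's original inductive obstruction-theoretic proof and is correct in outline, but the paper follows a genuinely different route. In Section~\ref{ssec:min-model} the paper sketches Markl's explicit \emph{tree formulae} (the homotopy transfer theorem): one fixes a contraction $(i,p,h)$ with $p\circ i = \id$ and $\id + i\circ p = \delta h + h\delta$, and then defines $\mu_k = p\circ g_k\circ i^{\otimes k}$ and $\phi_k = h\circ g_k\circ i^{\otimes k}$, where $g_k$ is a sum over rooted planar trees with $k$ leaves, decorated by $m_j$ at each $(j{+}1)$-valent vertex and $h$ on each internal edge. The verification of the $A_\infty$ relations and morphism identities is then a combinatorial argument on trees, which the paper cites to Markl. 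Your inductive construction is more conceptual and requires no tree bookkeeping; the paper's closed-form formulae, on the other hand, are exactly what is needed later---for instance, Proposition~\ref{prop:a-infty-massey} identifies $\mu_3$ with the Massey product by reading off the three trees in $\Gamma_3$ and recognizing $h\circ m_2(a_i,a_{i+1})$ as the auxiliary cochain in the Massey definition. So the paper's choice is driven by its downstream applications.

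One genuine gap in your proposal is the uniqueness argument. When both source and target have $\mu_1=\mu_1'=0$, the morphism relation~\eqref{a-infty-morphism} at level $n$ contains \emph{no} term involving $\psi_n$ at all: on the left the only candidate is $\psi_n\circ(1^{\otimes i}\otimes\mu_1\otimes 1^{\otimes k})=0$, and on the right $\mu_1'\circ\psi_n=0$. So there is nothing to ``solve for''; the equation is a pure constraint on $\psi_1,\ldots,\psi_{n-1}$ and the $\mu_j,\mu_j'$, and you must explain why it holds. Your sentence ``each obstruction is already a cocycle of a zero-differential complex and has an immediate solution'' is therefore not to the point. The standard fix is to use that both minimal models come equipped with quasi-isomorphisms to the \emph{same} $(V,\m)$ and to invoke the (nontrivial) fact that $A_\infty$ quasi-isomorphisms admit $A_\infty$ quasi-inverses, then compose. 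Alternatively, in the paper's framework one gets the $A_\infty$ homotopy equivalence between $(H^*(V),\bmu)$ and $(V,\m)$ directly from Proposition~\ref{prop:a-infty-fmlae}, and uniqueness follows by composing such equivalences.
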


\subsection{The $A_\infty$ Structure on the Linearized Cochain Complex}
\label{ssec:ainfty-lch}

The reason for discussing $A_\infty$-algebras is the following
proposition.

\begin{prop} \label{prop:lch-is-ainfty} For each augmentation $\aug$,
  the Legendrian contact homology DGA $(\alg, \df)$ induces an
  $A_\infty$ structure on the linearized cochain complex $(A^*,
  \dde)$.
\end{prop}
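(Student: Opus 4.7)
The plan is to read off the $A_\infty$ operations by dualizing the components of the augmented differential and then derive the axioms from $\dfe^2 = 0$ together with the Leibniz rule.

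First, I use the augmentation to kill the constant term. Because $\aug \circ \df = 0$, the conjugated differential $\dfe = \phi^\aug \df (\phi^\aug)^{-1}$ satisfies $\dfe_0 = 0$, so only the components $\dfe_k : A \to A^{\otimes k}$ with $k \geq 1$ occur. I then define $m_k : (A^*)^{\otimes k} \to A^*$ to be the adjoint of $\dfe_k$ under the natural identification $(A^{\otimes k})^* \cong (A^*)^{\otimes k}$ afforded by the finite basis $\{q_i\}$. In particular $m_1 = \dde$, and a short degree check (using $|p_i| = |q_i|$ and the fact that $\dfe$ has total degree $-1$ on $\alg_K$) shows that each $m_k$ has degree $+1$, consistent with the suspended $A_\infty$ grading convention noted in the remark. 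The vanishing of $\dfe_0$ is what allows us to dispense with any curvature term $m_0$.

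Next, the $A_\infty$ axioms will fall out of $\dfe^2 = 0$. Since $\dfe$ is a derivation on $\alg_K$, applying $\dfe^2$ to a generator $q \in A$ and expanding by the Leibniz rule yields
\[
0 = \dfe^2 q = \sum_{k, \ell \geq 1} \sum_{r=1}^{k} \bigl( 1^{\otimes (r-1)} \otimes \dfe_\ell \otimes 1^{\otimes (k-r)} \bigr) \circ \dfe_k (q).
\]
Sorting this sum by the length $n = k - 1 + \ell$ of the output tensor, each length-$n$ summand must vanish separately. Dualizing the resulting identity as a map $A \to A^{\otimes n}$ and reindexing via $i = r - 1$, $j = \ell$, and $k' = k - r$ (so that the outer factor becomes $m_{i+1+k'}$ and the sum condition $k+\ell-1 = n$ becomes $i + j + k' = n$) converts it term-by-term into Equation~\eqref{main-a-infty} for $l = n$.

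The only real obstacle is the combinatorial bookkeeping in the previous step: verifying that the components of $\dfe^2 = 0$, once sorted by output length and dualized, match the $A_\infty$ axioms under the reindexing above. This is essentially the standard bar-construction correspondence between augmented DGAs and $A_\infty$ structures on the dual space, so I do not expect any geometric or analytic difficulty --- only the care needed to keep degrees, composition orders, and index ranges straight.
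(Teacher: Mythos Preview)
Your proposal is correct and follows essentially the same approach as the paper: define $m_k$ as the adjoint of $\dfe_k$, expand $(\dfe)^2=0$ via the Leibniz rule, sort by output tensor length, and dualize to obtain Equation~\eqref{main-a-infty}, with the degree check coming from $\dfe$ having degree $-1$. Your write-up is somewhat more explicit about the reindexing and the role of $\dfe_0=0$, but the argument is the same.
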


\begin{proof}
  Denote by $m^\aug_k$ the adjoint of $\dfe_k:A\to A^{\otimes k}$ for
  $k \geq 1$. Expanding the equation $(\dfe)^2=0$ using $\dfe=\sum
  \dfe_i$ and looking at the term with image in $A^{\otimes n}$ gives:
  \[
  \sum_{i+j+k=n} (1^{\otimes i}\otimes \dfe_j\otimes 1^{\otimes
    k})\circ \dfe_{i+1+k}=0.
  \]
  Dualizing yields Equation~\eqref{main-a-infty}.  That $m^\aug_k$ has
  degree $1$ follows from the fact that $\dfe_k$ has degree $-1$.
\end{proof}

\begin{ex}\label{ex:a-infty}
  Let $K$ be the Legendrian trefoil shown in Figure~\ref{fig:trefoil}.
  \begin{figure}[ht]
    \relabelbox \small {
      \centerline {\epsfbox{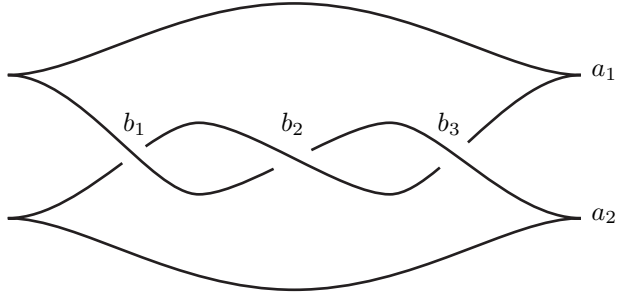}}} 
    \relabel{1}{$b_1$} 
    \relabel {2}{$b_2$} 
    \relabel{3}{$b_3$} 
    \relabel {5}{$a_1$}
    \relabel{6}{$a_2$} 
    \endrelabelbox
    \caption{Legendrian right handed trefoil knot.}
    \label{fig:trefoil}
  \end{figure}
  We label the Reeb chords $a_1,a_2, b_1,b_2$ and $b_3$ as shown in
  the figure. One may easily compute that $|a_i|=1$ and $|b_i|=0.$ In
  addition, we have:
  \begin{align*}
    \partial a_1&=1+b_1+b_3+ b_1b_2b_3\\
    \partial a_2&=1+b_1+b_3+ b_3b_2b_1\\
    \partial b_i&= 0.
  \end{align*}
  There are five different augmentations of this differential graded
  algebra; let us consider the augmentation $\aug$ that sends $b_3$ to
  $1$ and all other generators to $0$. The augmented differential is:
  \begin{align*}
    \dfe a_1&=b_1+b_3+ b_1b_2+ b_1b_2b_3\\
    \dfe a_2&=b_1+b_3+ b_2b_1+ b_3b_2b_1\\
    \dfe b_i&= 0.
  \end{align*}
  Thus, if we denote the dual of $a_i$ again by $a_i$, and similarly
  for $b_i$, the associated $A_\infty$-structure is:
  \begin{align*}
    m_1(a_1)&= 0 &  m_2(b_1,b_2)&=a_1\\
    m_1(a_2) &= 0 & m_2(b_2,b_1)&=a_2 \\
    m_1(b_1)&=a_1+a_2 & & \\
    m_1(b_2)&=0 & m_3(b_1,b_2,b_3)&=a_1 \\
    m_1(b_3)&=a_1+a_2 & m_3(b_3,b_2,b_1)&=a_2
  \end{align*} 
  All other possible $m_2$ and $m_3$ products are $0$, as are the
  $m_i$ for $i \geq 4$.  The $A_\infty$-algebras associated to the
  other four augmentations can be similarly computed.
\end{ex}

Like the set of linearized cohomologies, the set of $A_\infty$
structures on the linearized cochain complexes is an invariant.

\begin{thm}\label{thm:a-infty}
  If the DGA $(\alg, \df)$ of a Legendrian knot has a set of
  augmentations $\mathcal{E}$, then the set of all quasi-isomorphism
  types of the $A_\infty$-algebras
  \[\bigl\{(A^*, \m^\aug)\bigr\}_{\aug \in \mathcal{E}}
  \]
  is invariant under Legendrian isotopy of the knot.
\end{thm}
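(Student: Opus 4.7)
The plan is to mirror the strategy sketched for the linearized cohomology in Remark~\ref{rem:lin-pf}, upgrading it to track the full $A_\infty$ structure. Since the stable tame isomorphism class of $(\alg_K, \df)$ is a Legendrian isotopy invariant by Chekanov's theorem, it suffices to show that the set of $A_\infty$ quasi-isomorphism types $\{(A^*, \m^\aug)\}_{\aug \in \mathcal{E}}$ is preserved under (i) tame isomorphism and (ii) stabilization of the Chekanov-Eliashberg DGA.

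For a tame isomorphism $\psi : (\alg, \df) \to (\alg', \df')$ and an augmentation $\aug'$ of the target, Remark~\ref{rem:lin-pf} supplies a pullback augmentation $\aug = \aug' \circ \psi$ of $(\alg, \df)$ (producing a bijection of augmentation sets) and a DGA isomorphism $\overline{\psi} : (\alg, \df^\aug) \to (\alg', (\df')^{\aug'})$ that does not decrease tensor-word length. I would decompose the restriction $\overline{\psi}|_A = \sum_{k \geq 1} \overline{\psi}_k$ with $\overline{\psi}_k : A \to (A')^{\otimes k}$, and take $\phi_k : ((A')^*)^{\otimes k} \to A^*$ to be the adjoint of $\overline{\psi}_k$. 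The claim is that $\boldsymbol{\phi} = \{\phi_k\}$ assembles into an $A_\infty$ morphism from $((A')^*, \m^{\aug'})$ to $(A^*, \m^\aug)$; and since $\overline{\psi}_1$ is a linear isomorphism (being the length-one component of an algebra isomorphism that respects the word-length filtration), $\phi_1$ is an isomorphism and $\boldsymbol{\phi}$ is automatically a quasi-isomorphism. Equation~\eqref{a-infty-morphism} then follows by expanding both sides of $\overline{\psi} \circ \df^\aug = (\df')^{\aug'} \circ \overline{\psi}$ applied to a generator of $A$: the algebra-map property of $\overline{\psi}$ turns monomials in $\df^\aug q$ into tensor products of the $\overline{\psi}_k$, while the Leibniz rule for $(\df')^{\aug'}$ inserts a single $(\df')^{\aug'}_j$ at each tensor slot of $\overline{\psi}_k(q)$; dualizing and grouping terms by word length in $A'$ yields~\eqref{a-infty-morphism}.

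For stabilization $S_i$, any augmentation of $\alg_K$ extends to $S_i(\alg_K)$ by setting $\aug(e^i_1) = \aug(e^i_2) = 0$, and any augmentation of $S_i(\alg_K)$ restricts to one of $\alg_K$. Because $\df e^i_1 = e^i_2$ contributes only a length-one term and neither $e^i_1$ nor $e^i_2$ appears in the differential of any original generator, the dual $A_\infty$ structure on $(A \oplus \langle e^i_1, e^i_2 \rangle)^*$ splits as the direct sum of $(A^*, \m^\aug)$ with an acyclic two-dimensional $A_\infty$-algebra on $\langle (e^i_1)^*, (e^i_2)^* \rangle$ whose only nonzero operation is $m_1$; projection onto the first summand is an $A_\infty$ quasi-isomorphism.

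The main obstacle is the combinatorial bookkeeping in the tame-isomorphism step: matching the summands produced by $\overline{\psi} \circ \df^\aug$ (an algebra map applied to monomials of the differential) with those produced by $(\df')^{\aug'} \circ \overline{\psi}$ (a derivation applied to tensor words) so that, after dualization, they assemble precisely into the two sides of~\eqref{a-infty-morphism}. The identification is formal once one recognizes the differential on a free tensor algebra as dual data to an $A_\infty$ structure on its generators, but verifying it explicitly requires careful index-tracking.
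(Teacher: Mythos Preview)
Your proposal is correct and follows essentially the same route as the paper: reduce to tame isomorphisms and stabilizations, then for a tame isomorphism expand the chain-map identity $\overline{\psi}\circ\df^\aug=(\df')^{\aug'}\circ\overline{\psi}$ into word-length components and dualize to obtain the $A_\infty$-morphism relations~\eqref{a-infty-morphism}, while for a stabilization observe that the dual structure splits off an acyclic summand. The only cosmetic differences are that the paper phrases the reduction as working directly with DGAs already satisfying $\df_0=\df'_0=\psi_0=0$ (your $\overline{\psi}$), and handles stabilization via the inclusion map rather than your projection; both choices are equivalent.
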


Theorem~\ref{thm:kad} shows that there is an induced $A_\infty$
structure on the linearized cohomology, and that it is also an
invariant.

\begin{cor}
  The following structures are invariant of a Legendrian knot up to
  Legendrian isotopy:
  \begin{enumerate}
  \item The set of linearized cohomology rings together with their
    higher order product structures.
  \item The set of $A_\infty$ algebras
    $\bigl\{(LCH^*_\aug(K), \bmu^\aug)\bigr\}_{\aug \in
      \mathcal{E}}.$
  \end{enumerate}
\end{cor}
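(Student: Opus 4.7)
The plan is to deduce the corollary directly from Theorem~\ref{thm:a-infty} together with the Minimal Model Theorem (Theorem~\ref{thm:kad}) and the observation already made in Section~\ref{ssec:ainfty} that $A_\infty$ quasi-isomorphisms preserve products and Massey products on cohomology.

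First I would establish part (2). For each augmentation $\aug\in\mathcal{E}$, apply Kadeishvili's theorem to the linearized cochain $A_\infty$-algebra $(A^*,\m^\aug)$ produced by Proposition~\ref{prop:lch-is-ainfty}. This yields an $A_\infty$-structure $\bmu^\aug$ on the cohomology $H^*(A^*,\dde)=LCH^*_\aug(K)$ with $\mu_1^\aug=0$ and an $A_\infty$ quasi-isomorphism $(LCH^*_\aug(K),\bmu^\aug)\to(A^*,\m^\aug)$. I then need to check that if $(A^*,\m^\aug)$ and $(A'^*,\m^{\aug'})$ are $A_\infty$ quasi-isomorphic as in Theorem~\ref{thm:a-infty}, then their minimal models are also $A_\infty$ quasi-isomorphic. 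This is immediate from the uniqueness clause in Theorem~\ref{thm:kad}: both $(LCH^*_\aug(K),\bmu^\aug)$ and $(LCH^*_{\aug'}(K),\bmu^{\aug'})$ are minimal models of the same quasi-isomorphism class, so by uniqueness they are $A_\infty$ quasi-isomorphic. Thus the set in (2) is well-defined up to $A_\infty$ quasi-isomorphism, and Theorem~\ref{thm:a-infty} gives the Legendrian isotopy invariance of this set.

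Part (1) then follows as a formal consequence of part (2). The cohomology ring is $(H^*(V),\mu_2^\aug)$, which is extracted from the $A_\infty$-algebra $(LCH^*_\aug(K),\bmu^\aug)$ by forgetting $\mu_k$ for $k\neq 2$. The higher-order Massey products are defined directly from the maps $m_k^\aug$ on $(A^*,\m^\aug)$ (or equivalently from $\bmu^\aug$) by the inductive construction recalled in Section~\ref{ssec:ainfty}. As noted there, Equation~\eqref{a-infty-morphism} forces the first component $\phi_1$ of any $A_\infty$ morphism to send $m_2$ to $m_2$ on cohomology and to preserve all higher Massey products (modulo the indeterminacies given by successive images of lower-order products). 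Consequently, any $A_\infty$ quasi-isomorphism of the minimal models induces an isomorphism of cohomology rings that intertwines every defined Massey product, so the set in (1) is invariant.

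The main non-trivial input is the uniqueness part of Kadeishvili's theorem, which guarantees that the minimal model is an invariant of the $A_\infty$ quasi-isomorphism class rather than of the particular cochain-level representative; once this is in hand, there is essentially nothing left to do beyond the naturality check for Massey products, which is standard and was already indicated in Section~\ref{ssec:ainfty}.
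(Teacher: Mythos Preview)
Your proposal is correct and is essentially the same argument the paper gives: the paper treats the corollary as immediate, stating just before it that ``Theorem~\ref{thm:kad} shows that there is an induced $A_\infty$ structure on the linearized cohomology, and that it is also an invariant,'' and leaving the details implicit. Your write-up simply unpacks this sentence, invoking Theorem~\ref{thm:a-infty}, the uniqueness clause of Kadeishvili's theorem, and the observation from Section~\ref{ssec:ainfty} that $A_\infty$ quasi-isomorphisms preserve cup and Massey products---exactly the ingredients the paper points to.
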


\begin{proof}[Proof of Theorem~\ref{thm:a-infty}]
  As in Remark~\ref{rem:lin-pf}, it suffices to show that if $(\alg,\df)$
  and $(\alg', \df')$ are stable tame isomorphic DGAs such that
  $\df_0=0=\df'_0$ and the tame isomorphism $\psi$ between the
  stabilizations satisfies $\psi_0=0$, then their associated
  $A_\infty$-algebras are $A_\infty$-quasi-isomorphic.  We shall check
  that the statement is true for tame isomorphisms and stablizations.
  
  First, let $\psi:\mathcal{A}\to \mathcal{A}'$ be a tame isomorphism
  satisfying the conditions above. The component of $\psi\circ
  \df=\df'\circ \psi$ applied to $a\in A$ 
  written in terms of the components $\df_i$, $\df'_i$ 
  and $\psi_i$ is 
  \[
  \sum_{i+j+k=n} (1^{\otimes i}\otimes \df_j\otimes 1^{\otimes k})
  \circ \psi_{i+1+k}= \sum_{1\leq r\leq n, i_1+\ldots+i_r=n}
  (\psi_{i_1}\otimes\cdots\otimes \psi_{i_r})\circ \df_r.
  \]
  Setting $\phi_n$ equal to the dual of $\psi_n$, we clearly see that
  Equation~\eqref{a-infty-morphism} is dual to this
  equation. Moreover, as we know a tame isomorphism of differential
  graded algebras induces an isomorphism on linearized cohomology, we
  see that the collection of maps $\boldsymbol{\phi} = \{\phi_n\}$ is
  an $A_\infty$-quasi-isomoprhism.

  Now consider $\psi: \alg \to \alg' = S(\alg)$ to be the inclusion of
  $\alg$ into a stabilization.  Specifically, let
  $A'=A\oplus\zz_2\langle a,b\rangle$ where $\df'a=b$ and $\df c=\df'
  c$ for $c\in A$. Note that $\psi_1$ is the inclusion map and
  $\psi_n=0$ for $n>1.$ The result clearly follows.
\end{proof}

\section{Product Structures as Invariants}
\label{sec:invariants}

In this section, we consider the products induced by $A_\infty$
structure on the linearized cochain complex.  That is, we study the
cup and Massey products on the linearized contact cohomology of a
Legendrian knot in more detail, prove that they are nontrivial
invariants, and relate the cup product to the duality of
\cite{duality}.

Throughout this section, we let $(\mathcal{A}_K,\df)$ be a
differential graded algebra associated to a Legendrian knot $K$ in
$\rr^3$ with its standard contact structure. Let
$\aug:\mathcal{A}_K\to\zz_2$ be an augmentation and let
$\dfe:\mathcal{A}_K\to \mathcal{A}_K$ be the associated differential
with $\dfe_0=0.$

\subsection{The Cup Product}
\label{ssec:cup}

We summarize the discussion of the $\mu_2$ product from the previous
section as follows:
\begin{cor} \label{cor:cup-exist}
  There is an associative product on the linearized contact cohomology
  of $K$ given by the $\mu_2$ product:
  \[
  LCH^{k}_\aug(K)\otimes LCH^{l}_\aug(K)\to LCH_\aug^{k+l+1}(L):
  [a] \otimes [b] \mapsto [a]\cup[b].
  \]
  Moreover, the set of all linearized contact cohomology rings is an
  invariant Legendrian isotopy.
\end{cor}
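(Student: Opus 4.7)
The plan is to assemble pieces already developed in Sections~\ref{ssec:ainfty} and \ref{ssec:ainfty-lch}. First I would invoke Proposition~\ref{prop:lch-is-ainfty} to equip the linearized cochain complex $(A^*,\dde)$ with an $A_\infty$ structure $\m^\aug = \{m_k^\aug\}$ satisfying $m_1^\aug = \dde$. The $l=2$ case of the defining relation \eqref{main-a-infty} then guarantees that $m_2^\aug$ descends to a well-defined bilinear operation $\mu_2$ on $LCH^*_\aug(K) = H^*(A^*,\dde)$, and the $l=3$ case, written out in \eqref{eqn:m3}, shows that $\mu_2$ is associative. Because each $m_k^\aug$ has degree $+1$ in our conventions (see the remark after Theorem~\ref{thm:kad}), $\mu_2$ raises degree by one, which accounts for the target $LCH^{k+l+1}_\aug(K)$ in the statement; by definition this product is the cup product $[a]\cup[b]$.

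For the invariance assertion, I would appeal to Theorem~\ref{thm:a-infty}: the set of $A_\infty$-algebras $\{(A^*,\m^\aug)\}_{\aug\in\mathcal{E}}$ is a Legendrian isotopy invariant up to $A_\infty$ quasi-isomorphism. It therefore suffices to verify that any $A_\infty$ quasi-isomorphism $\boldsymbol{\phi}$ between two such algebras induces an isomorphism of the associated cup-product rings on cohomology. This is exactly the content of the $n=2$ case of the morphism relation \eqref{a-infty-morphism}, as already observed in the display immediately following that equation: passing to cohomology classes one obtains $\phi_1\mu_2([a],[b]) = \mu_2(\phi_1[a],\phi_1[b])$, and $\phi_1$ is a cohomology isomorphism by the very definition of $A_\infty$ quasi-isomorphism. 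Hence the sets of cup-product rings $\{(LCH^*_\aug(K),\cup)\}_{\aug\in\mathcal{E}}$ coincide as sets of isomorphism classes of graded rings.

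I do not expect a genuine obstacle; the argument is entirely formal given the preceding setup. The only items worth verifying carefully are bookkeeping: the degree shift arising from our nonstandard convention on the $m_k$, and that Theorem~\ref{thm:a-infty} applies as stated when passing between different augmentations and the possibly stabilized, tame-isomorphic DGAs produced along an isotopy.
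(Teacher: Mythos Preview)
Your proposal is correct and matches the paper's approach exactly: the paper presents this corollary explicitly as a summary of the discussion in Section~\ref{sec:prod}, with no separate proof, relying on precisely the ingredients you cite (Proposition~\ref{prop:lch-is-ainfty}, the $l=2,3$ cases of \eqref{main-a-infty}, and Theorem~\ref{thm:a-infty} together with the observation that $\phi_1$ preserves $\mu_2$). One minor bibliographic slip: the remark on degree conventions you invoke appears \emph{before} Theorem~\ref{thm:kad}, not after.
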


\begin{ex}\label{ex:trfoilprod}
  Consider the Legendrian trefoil $K$ from Figure~\ref{fig:trefoil}
  again. We computed the $A_\infty$-algebra structure in
  Example~\ref{ex:a-infty} above. From there, we easily see that
  $LCH^1_{\aug}(K)=\zz_2$ generated by $a=[a_1]=[a_2]$ and
  $LCH^0_{\aug}=\zz_2\oplus\zz_2$ generated by $b=[b_2]$ and
  $c=[b_1+b_3].$ Moreover we easily see that:
  \[
  b\cup c=a\quad c\cup b=a
  \]
  and all other products are zero. Note that the product structure
  here is commutative. This is not the case in general.
\end{ex}

We are now ready to prove the first part of Theorem~\ref{thm:mir-ex},
i.e.\ that the set of linearized contact cohomology rings is a
nontrivial invariant and stronger than the linearized contact
cohomology groups.

\begin{figure}[ht]
  \relabelbox \small {
    \centerline {\epsfbox{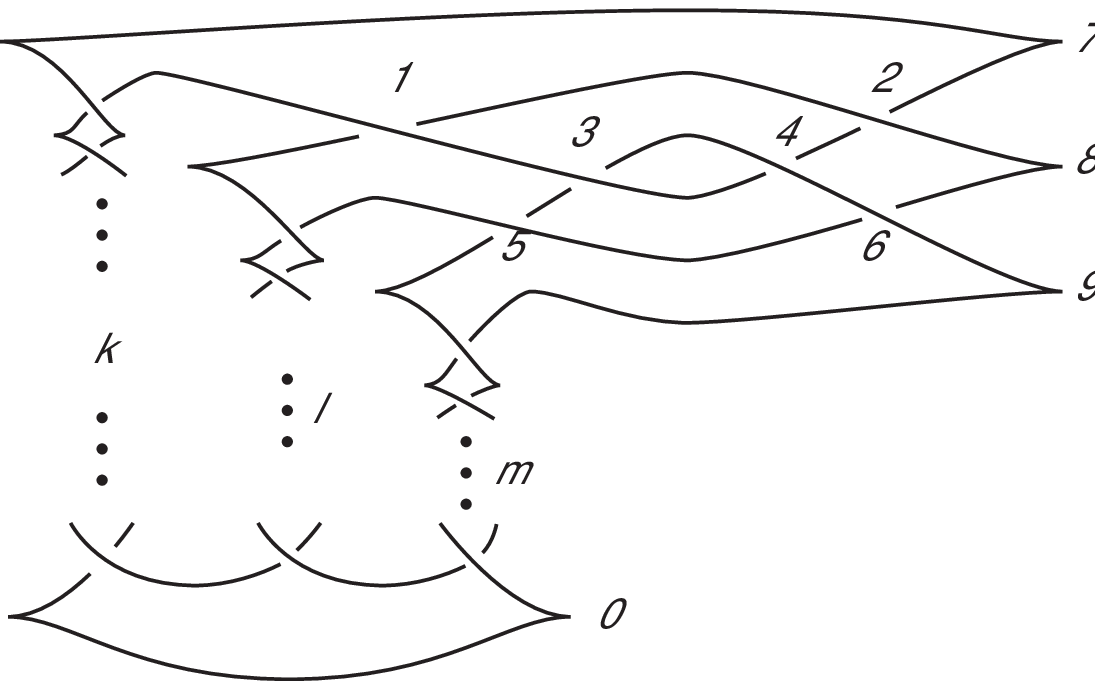}}} 
  \relabel{1}{$a_1$} 
  \relabel {2}{$a_2$} 
  \relabel{3}{$b_1$} 
  \relabel {4}{$b_2$}
  \relabel{5}{$c_1$} 
  \relabel{6}{$c_2$} 
  \relabel {7}{$t_1$} 
  \relabel{8}{$t_2$} 
  \relabel {9}{$t_3$}
  \relabel{0}{$t_0$}
  \relabel{k}{$k$} 
  \relabel{l}{$l$} 
  \relabel{m}{$m$} 
  \endrelabelbox
  \caption{This knot is distinguished from its Legendrian mirror by
    its cohomology ring. The crossings along the left most, center and
    right most legs are denoted, respectively, by $x_i,$ $y_i,$ and
    $z_i.$ Similarly the crossings coming from resolving the right
    cusps along these legs are denoted by $t^x_i, t^y_i$ and $t^z_i$
    respectively.}
  \label{fig:cup-ex}
\end{figure}

\begin{proof}[Proof of the first part of Theorem~\ref{thm:mir-ex}] For infinitely many
  choices of $k,l,m$, the Legendrian knot in Figure~\ref{fig:cup-ex}
  is not Legendrian isotopic to its Legendrian mirror.  The knot and
  its mirror have the same classical invariants and the same
  linearized cohomology, but different linearized cohomology rings.

  To see this, we first compute the gradings of the generators:
  \begin{align*}
    |a_1| = -|a_2| &= k-l-1 \\
    |b_1| = -|b_2| &= k-m-1 \\
    |c_1| = -|c_2| &= l-m-1 \\
    |t_i| = |t^x_i|=|t^y_i|=|t^z_i| &= 1 \\
    |x_i| = |y_i| = |z_i| &= 0
  \end{align*}
  For infinitely many choices of $k,l,m$, the gradings in each row will be
  distinct.

  The differential has the following form:
  \begin{align*}
    \df a_1 &= 0 &   \df t_1 &= 1+ x_1 (1+a_1a_2+b_1b_2) \\
    \df a_2 &= y_1c_1b_2 & \df t_2 &= 1+ (1+a_2a_1)y_1(1+c_1c_2) \\
    && \df t_3 &= 1+ (1+b_2b_1 + c_2c_1)z_1 \\
    \df b_1 &= a_1y_1c_1 & \df t_0 &= 1 + x_{k+1} y_{l+1} z_{m+1} \\
    \df b_2 &= 0 \\
    && \df t^x_i &= 1+ x_ix_{i+1} \\
    \df c_1 &= 0 & \df t^y_i &= 1+ y_iy_{i+1} \\ \df c_2 &= b_2a_1y_1 & \df
    t^z_i &= 1+ z_iz_{i+1}
  \end{align*}

  Recall that for the Legendrian mirror of $K$, the ordering of the
  generators in the differential are all reversed.  In either case,
  since all but at most one of the $a_i$, $b_i$, or $c_i$ have nonzero
  grading, then there is a unique augmentation $\aug$ that sends the
  $x_i$, $y_i$, and $z_i$ to $1$ and all other generators to $0$.

  The linearized codifferential $\dde$ of all generators $a_i$, $b_i$,
  and $c_i$ vanishes (where we again abuse notation and identify a
  generator with its dual), as does the linearized codifferential of
  the generators coming from the right cusps.  The linearized
  codifferentials of the $x_i$, $y_i$, and $z_i$ generators are sums
  of ``adjacent'' right cusp generators, so they show that the
  generators coming from the right cusps are all equal in cohomology.
  Thus, we have the following compuation:
  \begin{equation}
    LCH^k_\aug(K) = \langle [a_i], [b_i], [c_i], [t] \rangle.
  \end{equation}
  The nontrivial cup products are:
  \begin{align*}
    [a_1]\cup[a_2] = [a_2] \cup [a_1] &= [t] & [c_1]\cup[b_2] &=
    [a_2] \\
    [b_1] \cup [b_2] = [b_2] \cup [b_1] &= [t] & [a_1] \cup [c_1] &=
    [b_1] \\
    [c_1] \cup [c_2] = [c_2] \cup [c_1] &= [t] & [b_2] \cup [a_1] &=
    [c_2]
  \end{align*}
  The cup products in the left column will be interpreted as part of a
  Poincar\'e duality pairing in the next section.  The cup products in
  the right column are not symmetric; the first, for example, is a
  nontrivial map from $LCH^{l-m-1}_\aug \otimes LCH^{m-k+1}_\aug \to
  LCH^{l-k+1}_\aug$.  Under the assumption that the generators $a_i$,
  $b_i$, and $c_i$ have distinct gradings, we can then easily see that
  no such nontrivial cup product exists in the cohomology ring of the
  Legendrian mirror.  Hence, the knot $K$ and its Legendrian mirror
  are not Legendrian isotopic.
\end{proof}

\begin{rem}
  There are examples of Legendrian knots with small crossing number
  that have augmentations with noncommutative linearized cohomology
  rings: consider, for example, the mirrors of the knots $8_{21}$,
  $9_{45}$, or $9_{47}$ in Melvin and Shrestha's table
  \cite{melvin-shrestha}.
\end{rem}

\subsection{Duality}
\label{ssec:duality}



We are now ready to prove Theorem~\ref{thm:cup-duality} concerning the
duality in \cite{duality}. We note that Theorem~\ref{thm:cup-duality}
implies the product operation in the ring structure of linearized
contact cohomology is non-trivial, while the first part of
Theorem~\ref{thm:mir-ex} shows that there are non-trivial products
that are not forced by the duality theorem.

\begin{proof}[Proof of Theorem~\ref{thm:cup-duality}]
  As described in \cite{duality}, there is chain complex $(Q_*,
  \df_Q)$ that can be thought of in two ways: first, it is the
  mapping cone for a map $\rho: (A_*, \dfe_1) \to CM_*(S^1; f)$,
  where $CM_*$ is the Morse complex for a Morse function $f$ on $S^1$.
  As noted in \cite{high-d-duality}, the long exact sequence of the
  mapping cone is:
  \begin{equation} \label{eqn:les}
    \cdots \to H_{k+1}(S^1) \to H_k(Q) \to LCH^\aug_k(K)
    \stackrel{\rho_*}{\to} H_{k}(S^1) \to \cdots.
  \end{equation}
  Further, $\rho_*$ is trivial in dimension $0$ and onto in dimension
  $1$; see the discussion after Lemma 4.9 of \cite{duality} or Theorem
  5.5 of \cite{high-d-duality}.
  
  The second perspective on $H_*(Q)$ is that there exists an
  isomorphism $\eta_*: H_k(Q) \to LCH^{-k}_\aug(K)$.  Putting these
  viewpoints together yields the isomorphisms:
  \begin{align*}
      LCH^1_\aug(K) & \simeq LCH_{-1}^\aug(K) \oplus H_0(S^1) &\\
      LCH_1^\aug(K) & \simeq LCH^{-1}_\aug(K) \oplus H_1(S^1) &\\
      LCH^k_\aug(K) & \simeq LCH_{-k}^\aug(K) & k \neq \pm 1.
  \end{align*}
  Let $c$ be the image under $\eta_*$ of a generator of $H_0(S^1)$ and
  define:
  \begin{equation}
    \overline{LCH}_\aug^{1} = \eta_* LCH_{-1}^\aug(K).
  \end{equation}
  Finally, we define $\kappa \in LCH_1^\aug(K)$ to come from
  $H_1(S^1)$.  The main theorem of \cite{high-d-duality} shows that
  $\kappa$ is the unique class that pairs to $1$ with $c$ and pairs to
  $0$ on $\overline{LCH}_\aug^{1}$, and hence agrees with the $\kappa$
  defined in Theorem 5.1 of \cite{duality}.

  The map $\eta_*$ has an inverse $\phi_*$ which comes from a ``cap
  product''.  More specifically, the chain map $\phi(p)$ is
  constructed in \cite{duality} by counting immersed disks with one
  negative corner at $p$, one negative corner at the output $q'$, one
  positive corner at $r$ with $\langle r, \kappa \rangle = 1$ (in that
  counterclockwise order), and possibly other negative augmented
  corners.  Such disks, however, also contribute to the evaluation of
  the product $m_2(p,p')$ on $\kappa$.  Passing to homology, we obtain:
  \begin{equation}
    \langle [p'], \phi_*[p] \rangle = \langle[p] \cup [p'], \kappa \rangle.
  \end{equation}
  Since $\phi_*$ is invertible on $\overline{LCH}^*_\aug$, the pairing
  on the right must be nondegenerate, as desired.

  To see that the pairing is symmetric, notice that we could have
  defined a map $\hat{\phi}$ using disks with one negative corner at
  $p$, one positive corner at $r$ with $\langle r, \kappa \rangle =
  1$, one negative corner at the output $q'$ (in that counterclockwise
  order), and possibly other negative augmented corners.  The induced
  map $\hat{\phi}_*$ also serves as an inverse for $\eta_*$, and hence
  must be the same map as $\phi_*$.  Thus:
  \begin{align*}
    \langle [p] \cup [p'], \kappa \rangle &= \langle [p'], \phi_*[p] 
    \rangle \\
    &= \langle [p'], \hat{\phi}_*[p] \rangle \\
    &= \langle [p'] \cup [p], \kappa \rangle.
  \end{align*}
\end{proof}

\subsection{The Massey Product}
\label{ssec:massey}

In this subsection, we study the Massey product on the linearized
contact cohomology of a Legendrian knot in more detail. Using the same
notation as in Section~\ref{ssec:cup}, we summarize the discussion of
the product from the Subsection~\ref{ssec:ainfty} in the following
corollary:

\begin{cor}
  If $[a],[b]$ and $[c]$ are elements in $LCH^*_\aug(K)$ of degrees
  $r,s$ and $t,$ respectively such that
  \[
  [a]\cup [b]=0=[b]\cup [c]
  \]
  then there is a well defined element 
  \[
  \{[a],[b],[c]\}\in \frac{LCH^{r+s+t+1}_\aug(K)}{\bigl(\img
      (\mu^\aug_2([a],\cdot))+\img(\mu^\aug_2(\cdot,[c]))\bigr)\cap
    LCH^{r+s+t+1}_\aug(K)}
  \]
  given by \[[m^\aug_3(a,b,c) + m^\aug_2(a,y) + m^\aug_2(x,c)],\]
  where $\delta^\aug x = m^\aug_2(a,b)$ and $\delta^\aug y = m^\aug_2
  (b,c)$.
\end{cor}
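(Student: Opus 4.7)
The plan is to transcribe the general Massey triple product construction of Section~\ref{ssec:ainfty} to the specific $A_\infty$-algebra $(A^*, \m^\aug)$ furnished by Proposition~\ref{prop:lch-is-ainfty}. Since $LCH^*_\aug(K)$ is the cohomology of $(A^*, \dde)$, the corollary amounts to organizing the standard argument in three steps: existence of the primitives $x, y$, the cocycle property of the proposed representative, and the verification that its cohomology class is well defined in the stated quotient.

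First I would observe that $[a]\cup[b] = [\mu^\aug_2([a],[b])] = 0$ forces $m^\aug_2(a,b) = \dde x$ for some cochain $x$ of degree $r+s$, and similarly $m^\aug_2(b,c) = \dde y$ for some $y$ of degree $s+t$. A direct degree count (each $m^\aug_k$ having degree $1$) then places $\zeta := m^\aug_3(a,b,c) + m^\aug_2(a,y) + m^\aug_2(x,c)$ in $(A^*)^{r+s+t+1}$. To show $\dde\zeta = 0$, I would apply the $l = 3$ instance of Equation~\eqref{main-a-infty} to the triple $(a,b,c)$; since $\dde a = \dde b = \dde c = 0$, the relation collapses to $\dde m^\aug_3(a,b,c) = m^\aug_2(a, m^\aug_2(b,c)) + m^\aug_2(m^\aug_2(a,b), c)$. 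Substituting $m^\aug_2(b,c) = \dde y$ and $m^\aug_2(a,b) = \dde x$ on the right and invoking the $l = 2$ relation to expand $\dde m^\aug_2(a,y)$ and $\dde m^\aug_2(x,c)$ produces exactly the cancellation (over $\zz_2$) that yields $\dde\zeta = 0$.

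Finally I would check well-definedness of $[\zeta]$ in the stated quotient. A different choice of primitives $x' = x + x_0$, $y' = y + y_0$ with $x_0, y_0$ cocycles changes $\zeta$ by $m^\aug_2(x_0, c) + m^\aug_2(a, y_0)$, which descends to an element of $\img(\mu^\aug_2(\cdot,[c])) + \img(\mu^\aug_2([a],\cdot))$. Replacing a cocycle representative, say $a$ by $a + \dde a'$, forces a compensating change in the primitive $x$ by a term whose boundary is $m^\aug_2(a',b)$, and a direct computation using the $l = 2, 3$ relations shows that the resulting change in $\zeta$ is a coboundary plus a term in the same image subspace; the cases of $b$ and $c$ are analogous. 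The main obstacle is the bookkeeping required to show that every admissible modification lands exactly in $\img(\mu^\aug_2([a],\cdot)) + \img(\mu^\aug_2(\cdot,[c]))$ and not in some strictly larger subspace, but this is parallel to the classical DGA Massey product argument, with the term $m^\aug_3(a,b,c)$ absorbing precisely the failure of $m^\aug_2$ to be strictly associative at the cochain level.
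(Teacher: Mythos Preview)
Your proposal is correct and follows exactly the approach the paper intends: the corollary is stated without proof, introduced as a summary of the general Massey product construction in Subsection~\ref{ssec:ainfty} (following Stasheff) specialized to the $A_\infty$-algebra $(A^*,\m^\aug)$ of Proposition~\ref{prop:lch-is-ainfty}. You have simply filled in the details---the existence of the primitives, the cocycle check via Equation~\eqref{eqn:m3}, and the well-definedness in the quotient---that the paper leaves implicit.
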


\begin{ex}
  Consider the Legendrian trefoil $K$ from Figure~\ref{fig:trefoil}
  again. We computed the $A_\infty$-algebra structure in
  Example~\ref{ex:a-infty} and the product structure in
  Example~\ref{ex:trfoilprod}. Even though $m_3^{\aug}\not=0$, one
  may easily check that all Massey products are trivial in this
  example.
\end{ex}

Notice that if one wants to compare the Massey product structures on
the linearized contact cohomologies of two Legendrian knots one must
first have an isomorphism of their cohomology rings (that is, an
isomorphism that preserves the product structure). The Massey
product can be non-trivial and distinguish Legendrian knots that are
not distinguished by their linearized contact cohomology ring
structures.

\begin{proof}[Proof of second part of Theorem~\ref{thm:mir-ex}]
  The Legendrian knot $K$ in Figure~\ref{fig:massey-ex} is not
  isotopic to its Legendrian mirror. The two knots can be
  distinguished using the Massey products on the linearized contact
  cohomology but not by their linearized contact cohomology rings.
  \begin{figure}[ht]
  \relabelbox \small {
  \centerline {\epsfbox{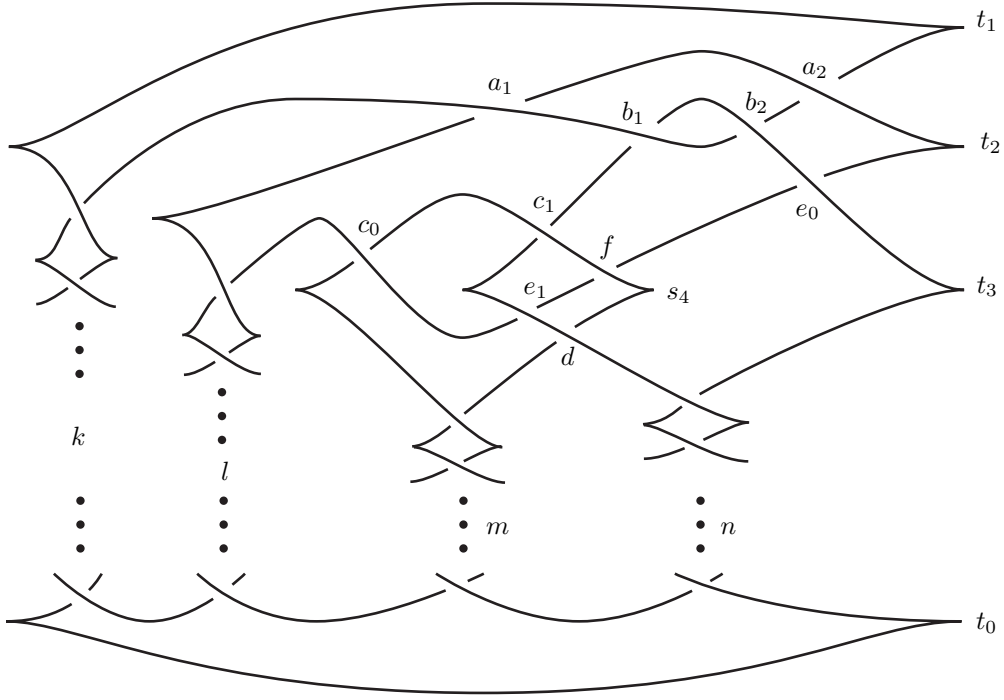}}} 
  \relabel{1}{$a_1$} 
  \relabel {2}{$a_2$} 
    \relabel{3}{$b_1$} 
  \relabel {4}{$b_2$}
    \relabel{5}{$e_1$} 
      \relabel{6}{$e_0$} 
  \relabel {7}{$c_0$} 
    \relabel{8}{$f$} 
  \relabel {9}{$c_1$}
  \relabel{0}{$s_4$}
   \relabel{a}{$d$} 
  \relabel {b}{$t_1$} 
    \relabel{c}{$t_2$} 
  \relabel{d}{$t_3$}
  \relabel{e}{$t_0$}
    \relabel{k}{$k$} 
     \relabel{l}{$l$} 
      \relabel{m}{$m$}
      \relabel{n}{$n$} 
  \endrelabelbox
  \caption{This knot is distinguished from its Legendrian mirror by
    its Massey products. The crossings along the left most, center left, center right and right most legs are denoted, respectively,  by $x_i,$ 
    $y_i, z_i$ and $w_i.$ Similarly the right cusps along these legs are denoted by $t^x_i, t^y_i, t^z_i$ and $t^w_i$ respectively.}
        \label{fig:massey-ex}
\end{figure}

  To see this, we first compute the gradings of the generators:
  \begin{align*}
    &|a_1| = -|a_2| = k-l-1 & &&
    &|b_1| = -|b_2| = k-n-1 \\
    &|c_1| = -|d| = m-n & &&
    &|c_0| = -|f| = l-m-1 & \\
    &|e_0| = |e_1|+1 = l-n-1 & &&
    &|t_i| = |t^x_i| = |t^y_i| = |t^z_i| = |t^w_i| = 1 \\
    &|x_i| = |y_i| = |z_i| = |w_i| = 0
  \end{align*}
  For infinitely many choices of $k,l,m$, and $n$, the gradings in each row and column will
  be distinct.

  The differential has the following form:

  \begin{align*}
    \df a_1 &= 0 & \df t_1 &= 1+ x_1 (1+a_1a_2+b_1b_2) \\
    \df a_2 &= y_1c_0c_1b_2 & \df t_2 &= 1+ (1+a_2a_1)y_1(1+c_0f +
    c_0c_1e_0) +a_2b_1e_1 \\
    && \df t_3 &= 1+ [1+b_2b_1 + (1 + b_2b_1) dc_1]w_1 \\
    \df b_1 &= a_1y_1c_0c_1 & \df t_4 &= 1+ (1+f c_0) z_1 + c_1d \\
    \df b_2 &= 0 & \df t_0 &= 1 + x_{k+1}  y_{l+1}  z_{m+1}  w_{n+1} \\
    && \\
    \df c_0 &= \df c_1 =  0 & \df t^x_i &= 1+ x_ix_{i+1} \text{ and similarly for  } t^y_i, t^z_i, t^w_i \\
    && \\
    \df d &= e_1c_0  &     \df e_0 &= (1+b_2b_1) e_1  \\
    \df f &= c_1e_1  &     \df e_1 &= 0  
  \end{align*}

  Since we assume that only the $x_i$, $y_i$, $z_i$, and $w_i$ have
  zero grading, there is a unique augmentation that sends these
  generators to $1$ and all others to $0$.  Abusing notation to
  identify generators and their duals, we see that the linearized
  cohomology is given by:
  \begin{equation}
    LCH^k_\aug(K) = \langle [a_i], [b_i], [c_i], [d], [f], [t] \rangle,
  \end{equation}
  where $[t]$ is once again any one of the right cusps.  

  Duality pairs the $[a_i]$, the $[b_i]$, $[c_0]$ with $[f]$, and
  $[c_1]$ with $[d]$.  There are no other nontrivial cup products; in
  fact, all cup products between cocycles (beyond those involved in
  the duality pairing) vanish at the cochain level.  Thus, it follows
  that the $m_3$ products between triples of cocycles yield two Massey
  products:
  \begin{align*}
    \{[c_0],[c_1],[b_2]\} &= [a_2], \\
    \{[a_1],[c_0],[c_1]\} &= [b_1].
  \end{align*}
  Since the only class in the image of the cup product is $[t]$, the
  Massey products above lie in $\overline{LCH}^*_\aug(K)$, and hence
  are nontrivial.  Under the assumption that the generators $a_i$,
  $b_i$, and $c_i$ have distinct gradings, we can then easily see that
  there are no nontrivial Massey products in these gradings in the
  linearized cohomology of the Legendrian mirror.  Hence, the knot $K$
  and its Legendrian mirror are not Legendrian isotopic even though
  their linearized cohomology rings are isomorphic.
\end{proof}

\subsection{Higher-Order Massey Products}
\label{ssec:hop}

As in the previous subsections, we can show that the higher-order
Massey products are also nontrivial.

\begin{proof}[Completion of the Proof of Theorem~\ref{thm:mir-ex}]
  The Legendrian knot $K_n$ in Figure~\ref{fig:high-massey-ex} is not
  isotopic to its Legendrian mirror. The two knots can be
  distinguished using the $(n+1)^{\text{st}}$-order Massey products on
  the linearized contact cohomology but not by their linearized
  contact cohomology rings or their $m^{\text{th}}$-order Massey
  products for $m \leq n$.

  By a similar calculation to the previous examples, one can show that
  the cup products (besides those associated with duality) and the
  lower-order Massey products all vanish, so the
  $(n+1)^{\text{st}}$-order Massey product lies in
  $\overline{LCH}^*_\aug(K)$. Further, the cup product and lower-order
  Massey products vanish at the chain level, so we have the following
  two Massey products whose gradings are nonsymmetric:
  \begin{align*}
    \{[c_0], \ldots, [c_n], [b_2] \} &= [a_2] \\
    \{[a_1], [c_0], \ldots, [c_n] \} &= [b_1].
  \end{align*}
  It follows that the knot $K_n$ is not isotopic to its Legendrian
  mirror.
\end{proof}

\begin{figure}[ht]
  \relabelbox \small {%
    \centerline{\epsfxsize=5.5in \epsfbox{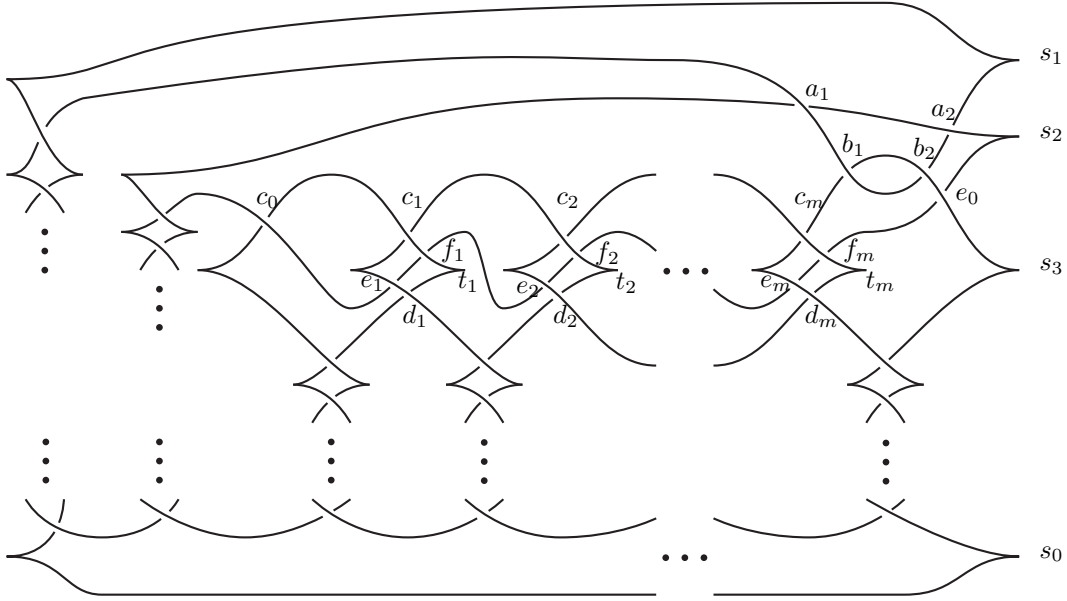}}}
  \relabel{1}{$a_1$}
  \relabel{2}{$a_2$}
  \relabel{3}{$b_1$}
  \relabel{4}{$b_2$}
  \relabel{5}{$c_0$}
  \relabel{6}{$c_1$}
  \relabel{7}{$c_2$}
  \relabel{8}{$c_m$}
  \relabel{9}{$d_1$}
  \relabel{a}{$d_2$}
  \relabel{b}{$d_m$}
  \relabel{c}{$e_1$}
  \relabel{d}{$e_2$}
  \relabel{e}{$e_m$}
  \relabel{f}{$e_0$}
  \relabel{g}{$f_1$}
  \relabel{h}{$f_2$}
  \relabel{i}{$f_m$}
  \relabel{j}{$t_1$}
  \relabel{k}{$t_2$}
  \relabel{l}{$t_m$}
  \relabel{m}{$s_1$}
  \relabel{n}{$s_2$}
  \relabel{o}{$s_3$}
  \relabel{p}{$s_0$}
  \endrelabelbox
  \caption{This knot is distinguished from its Legendrian mirror by
    its order $n+1$ Massey products.}
  \label{fig:high-massey-ex}
\end{figure}

\begin{rem}
  Using the ``splashes'' of \cite{fuchs:augmentations} or the ``dips''
  of \cite{rulings}, it is possible to show that the $A_\infty$
  structure on the linearized cochain complex is $A_\infty$
  quasi-isomorphic to one for which $m_k = 0$ for all $k \geq 4$.  As
  the examples above show, however, this does not mean that the
  $A_\infty$ structure $\bmu$ on the linearized contact
  cohomology is trivial for $k \geq 4$.
\end{rem}

\section{Products and Higher Order Linearizations}
\label{sec:higher-order}

In this section, we explore the relationship between the $A_\infty$
structure on the linearized contact cohomology, associated product
structures, and Chekonov's order $n$ linearizations.  



\subsection{The Minimal Model Theorem, Revisited}
\label{ssec:min-model}

We begin by sketching the proof of the Minimal Model
Theorem~\ref{thm:kad} following Markl's formulae in
\cite{markl:transfer}; see also \cite{kajiura:open-string,
  kontsevich-soibelman, merkulov, smirnov:book}.  First, let us
describe the construction of the maps of $\bmu$. The fact that we are
working over the field $\zz_2$ allows us to choose maps $i: H^*(V) \to
V$, $p: V \to H^*(V)$, and $h: V \to V$ such that:
\begin{equation}
  p \circ i = \id \quad \text{and} \quad \id + i \circ p 
  = \delta h + h \delta.
\end{equation}
We next consider the set $\Gamma_k$ of rooted planar trees with $k$
leaves (the root edge is not counted among the $k$ leaves) and at
least trivalent internal vertices.  For each $T \in \Gamma_k$, we
construct a map $g_T: V^{\otimes k} \to V$ by placing the inputs in
order along the $k$ leaves, an $m_k$ at each $(k+1)$-valent internal
vertex, and an $h$ at each internal edge; see
Figure~\ref{fig:rooted-trees}.  The map $g_T$ is then defined by
appropriately inserting arguments and composing maps from the leaves
down to the root.  We then define $g_1 = \delta$ and for $k \geq 2$,
the maps:
\[
g_k = \sum_{T \in \Gamma_k} g_T.
\]
These maps form a sequence $\mathbf{g}$.

\begin{figure}[ht]
  \relabelbox \small {%
    \centerline{\epsfbox{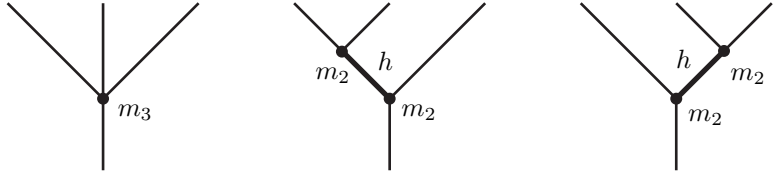}}}
  \relabel{1}{$m_3$}
  \relabel{2}{$m_2$}
  \relabel{3}{$m_2$}
  \relabel{4}{$m_2$}
  \relabel{5}{$m_2$}
  \relabel{6}{$h$}
  \relabel{7}{$h$}
  \endrelabelbox
  \caption{The rooted trees that make up the map $g_3$.  The rightmost
  tree gives the map $g_T(a,b,c) = m_2(a,h \circ m_2(b,c))$.}
  \label{fig:rooted-trees}
\end{figure}

The products $\bmu$ are then defined by:
\[
\mu_k = p \circ g_k \circ (i \otimes \cdots \otimes i).
\]
The product $\mu_3: H^*(V)^{\otimes 3} \to H^*(V)$, for example, is
defined as follows, where we write $i(\alpha_k) = a_k$:
\[\begin{aligned}
  \mu_3(\alpha_1, \alpha_2, \alpha_3) = & \,\, p \bigl( m_3( a_1, a_2,
  a_3) \\ &+ m_2\left(a_1, h \circ m_2(a_2,a_3)\right) + m_2\left(h
    \circ m_2(a_1,a_2), a_3 \right) \bigr).
\end{aligned}\] 

The maps $i$, $p$, and $h$ can also be extended to sequences of maps
$\mathbf{i}$, $\mathbf{p}$, and $\mathbf{h}$.  The map $i_k$, for
example, is defined by $i_k = h \circ g_k \circ (i \otimes \cdots
\otimes i)$.  The formulae for $p_k$ and $h_k$ are also based on
rooted planar trees, but are somewhat more involved.

\begin{prop}[Markl \cite{markl:transfer}] 
  \label{prop:a-infty-fmlae}
  The maps $\bmu$ give an $A_\infty$ structure on $H^*(V)$, the maps
  $\mathbf{i}$ and $\mathbf{p}$ are $A_\infty$ morphisms, and the maps
  $\mathbf{h}$ are an $A_\infty$ homotopy between $\mathbf{i} \circ
  \mathbf{p}$ and the identity on $V$.
\end{prop}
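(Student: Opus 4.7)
The plan is to carry out a tree-combinatorial verification in the style of Markl, exploiting the simplification that all coefficients lie in $\zz_2$ so signs disappear. The central object is the map $g_T$ associated to a rooted planar tree $T$ with $k$ leaves: leaves receive the arguments, each $(j+1)$-valent internal vertex carries $m_j$, and each internal edge carries $h$. I would begin by establishing a master identity describing how $\dd$ interacts with $g_T$: expanding
\[
\dd \circ g_T \;+\; g_T \circ \sum_{a=0}^{k-1} \bigl(1^{\otimes a} \otimes \dd \otimes 1^{\otimes (k-a-1)}\bigr)
\]
by walking $\dd$ through the tree, each internal edge of $T$ contributes a term $\dd h + h \dd = \id + i \circ p$, while each internal vertex contributes a sum of compositions coming from the defining $A_\infty$ relation $\sum m_{a+1+b} \circ (1^{\otimes a} \otimes m_j \otimes 1^{\otimes b}) = 0$ for $\m$.

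From this master identity the three parts follow by bookkeeping over $\Gamma_k$. For the $A_\infty$ relations for $\bmu$ at level $k$, I would sum the master identity applied to $p \circ g_T \circ (i \otimes \cdots \otimes i)$ over all $T \in \Gamma_k$. The internal-vertex $A_\infty$ contributions cancel pairwise between trees differing by a single vertex expansion (a standard tree-grafting argument), and the $\id$ contributions from internal edges also cancel pairwise: each arises from two trees related by contracting the decorated edge. What remains are exactly the $i \circ p$ contributions at internal edges; because $p \circ i = \id$, these factor as $\mu_{a+1+b} \circ (1^{\otimes a} \otimes \mu_j \otimes 1^{\otimes b})$, yielding Equation~\eqref{main-a-infty} for $\bmu$. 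The identical strategy, applied to the trees defining $\mathbf{i}_k = h \circ g_k \circ i^{\otimes k}$ and the analogous trees for $\mathbf{p}$, yields Equation~\eqref{a-infty-morphism}: the outer $h$ on $i_k$ is precisely what converts root-edge $\id + i \circ p$ terms into the $\mu_k$ contributions on the right-hand side of the morphism equation, while the outer $p$ in $\mu_k$ annihilates the $\id$ terms and retains only the $i \circ p$ pieces that reassemble into $n_r \circ (\phi_{i_1} \otimes \cdots \otimes \phi_{i_r})$. The homotopy claim for $\mathbf{h}$ is handled by the same mechanism, the tree formula for $h_k$ being engineered so that the leftover $i \circ p$ terms reassemble into $\mathbf{i} \circ \mathbf{p}$ and the $\id$ terms into the identity on $V^{\otimes k}$.

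The main obstacle is the careful enumeration of rooted planar trees and the matching of cancellations under edge contraction and vertex expansion, especially for the more intricate tree formulae for $p_k$ and $h_k$. Since the proposition is attributed directly to Markl, I would either follow \cite{markl:transfer} verbatim or adapt the homological perturbation lemma of Kontsevich--Soibelman, which packages the entire tree sum as a geometric series $\mathbf{g} = \mathbf{m} + \mathbf{m} \circ h \circ \mathbf{m} + \cdots$; the cancellations then become almost automatic once one verifies the side conditions $p \circ i = \id$, $p \circ h = 0$, $h \circ i = 0$, and $h \circ h = 0$, which can always be arranged by replacing $h$ with a suitable modification without affecting the chain homotopy identity $\id + i \circ p = \dd h + h \dd$.
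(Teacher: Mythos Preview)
The paper does not actually prove this proposition: it is stated with attribution to Markl \cite{markl:transfer} and no argument is given. The surrounding text in Section~\ref{ssec:min-model} only describes the \emph{construction} of the maps $g_T$, $g_k$, $\mu_k$, and $i_k$ (and mentions that $p_k$ and $h_k$ have more involved tree formulae) before invoking Markl's result as a black box. So there is no proof in the paper to compare your proposal against.

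That said, your sketch is in line with the standard approach in the references the paper cites (Markl, Kontsevich--Soibelman, Merkulov): the tree-sum/master-identity mechanism and the alternative packaging via the homological perturbation lemma with the side conditions $p\circ h=0$, $h\circ i=0$, $h^2=0$ are exactly the two standard routes. Your description of the cancellation mechanism is accurate at the level of an outline, though the edge-contraction and vertex-expansion matchings for $\mathbf{p}$ and $\mathbf{h}$ are genuinely delicate and would need to be written out carefully (or, as you suggest, imported directly from \cite{markl:transfer}); this is precisely why the paper defers to the literature rather than reproducing the argument.
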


Here, an $A_\infty$ homotopy between two $A_\infty$ morphisms $f,g:
(V, \m) \to (W, \n)$ is a sequence of degree $-1$ maps $h_n:
V^{\otimes n} \to V$ that satisfy:
\[
f_n + g_n = \sum_{i+j+k=n} h_{i+1+k}\circ(1^{\otimes i}\otimes
m_j\otimes 1^{\otimes k}) +\sum_{\substack{1\leq k \leq r\leq n \\
    i_1+\ldots+i_r=n}} n_r\circ (f_{i_1} \otimes \cdots \otimes
f_{i_{k-1}} \otimes h_{i_k} \otimes g_{i_{k+1}} \otimes \cdots \otimes
g_{i_r} ).
\]

\begin{rem} \label{rem:a-n} In particular, we have that $\mathbf{i}$
  is the $A_\infty$ quasi-isomorphism promised by the Minimal Model
  Theorem.  Note that the proposition also yields $A_n$ morphisms and
  homotopies by stopping the construction at any finite step.
\end{rem}

\subsection{$A_\infty$ Structures Determine Massey Products}
\label{ssec:a-infty-massey}

The relationship between the $A_\infty$ structure on $H^*(V)$ and the
Massey products is straightforward to state:

\begin{prop}[Kadeishvili \cite{kadeishvili}] \label{prop:a-infty-massey}
  Given $\alpha_k \in H^*(V)$, $k=1,2,3$, such that \[\mu_2(\alpha_1,
  \alpha_2) = 0 = \mu_2(\alpha_2, \alpha_3),\] the projection of
  $\mu_3(\alpha_1, \alpha_2, \alpha_3)$ to $\tilde{H}^*(V)$ agrees
  with the Massey product $\{\alpha_1, \alpha_2, \alpha_3\}$.
\end{prop}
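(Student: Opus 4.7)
The plan is to match the explicit formula for $\mu_3$ furnished by Markl's rooted-tree construction in Section~\ref{ssec:min-model} with the cochain representative used to define the Massey product in Section~\ref{ssec:ainfty}. First I would fix cocycle representatives $a_k := i(\alpha_k)$ for $k=1,2,3$; these are closed since $\mathbf{i}$ is an $A_\infty$ morphism and $\mu_1 = 0$ on $H^*(V)$. From the rooted-tree construction,
\[
\mu_3(\alpha_1,\alpha_2,\alpha_3) = p\bigl(m_3(a_1,a_2,a_3) + m_2(a_1, h\circ m_2(a_2,a_3)) + m_2(h\circ m_2(a_1,a_2), a_3)\bigr).
\]
The vanishing hypothesis $\mu_2(\alpha_1,\alpha_2) = 0 = \mu_2(\alpha_2,\alpha_3)$ unwinds to $p\circ m_2(a_j,a_{j+1}) = 0$, so $i\circ p$ annihilates $m_2(a_j,a_{j+1})$ for $j=1,2$.

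The key step is to invoke the chain-homotopy identity $\id + i\circ p = \delta h + h\delta$ on the elements $m_2(a_1,a_2)$ and $m_2(a_2,a_3)$. The $A_\infty$ relation at level $2$, together with $\delta a_k = 0$, gives $\delta m_2(a_j,a_{j+1}) = 0$, so the identity collapses to
\[
\delta\bigl(h\circ m_2(a_j,a_{j+1})\bigr) = m_2(a_j,a_{j+1}).
\]
Hence $x := h\circ m_2(a_1,a_2)$ and $y := h\circ m_2(a_2,a_3)$ are valid choices of bounding cochains in the definition of $\{\alpha_1,\alpha_2,\alpha_3\}$, and the Massey cocycle representative $m_3(a_1,a_2,a_3) + m_2(a_1,y) + m_2(x,a_3)$ is literally the argument of $p$ above. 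Applying $p$ therefore identifies $\mu_3(\alpha_1,\alpha_2,\alpha_3)$ with the cohomology class of this Massey representative.

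Finally, I would record that the equality must be read in $\tilde{H}^*(V)$: changing the bounding cochains by cocycles $\xi,\eta$ modifies the representative by $m_2(a_1,\eta) + m_2(\xi,a_3)$, whose class lies in $\img\mu_2(\alpha_1,\cdot) + \img\mu_2(\cdot,\alpha_3)$. Thus the projection of $\mu_3(\alpha_1,\alpha_2,\alpha_3)$ to $\tilde{H}^*(V)$ coincides with $\{\alpha_1,\alpha_2,\alpha_3\}$. The main difficulty is conceptually minor, amounting to careful bookkeeping of this indeterminacy and of the choice of representative $a_k$; all the substantive work is absorbed into the chain-homotopy identity defining $h$ and into the level-$2$ $A_\infty$ relation.
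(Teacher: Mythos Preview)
Your proof is correct and follows essentially the same route as the paper: set $x = h\circ m_2(a_1,a_2)$ and $y = h\circ m_2(a_2,a_3)$, use the homotopy identity $\id + i\circ p = \delta h + h\delta$ together with the vanishing of $\mu_2$ to verify that these are admissible bounding cochains, and then observe that Markl's tree formula for $\mu_3$ becomes exactly $p$ applied to the Massey representative. Your final paragraph on the indeterminacy is a slight expansion of what the paper leaves implicit, but the argument is otherwise the same.
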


To see this, choose $x = h \circ m_2(a_1,a_2)$ and $y = h \circ
m_2(a_2,a_3)$.  Notice that:
\begin{align*}
  \delta x &= m_2(a_1,a_2) + i \circ p \circ m_2(a_1,a_2) + h \delta
  m_2(a_1,a_2) \\
  &= m_2(a_1,a_2).
\end{align*}
Note that the last term in the first line vanishes since
$m_2(a_1,a_2)$ is a cycle, and the second-to-last term vanishes since
$m_2(a_1,a_2)$ represents the zero cohomology class by assumption.  A
similar fact holds for $y$, so we may take $x$ and $y$ to be the
elements required for the definition of the Massey product
$\{\alpha_1, \alpha_2, \alpha_3\}$.  Now we need only compute that:
\begin{align*}
  \mu_3(\alpha_1, \alpha_2, \alpha_3) & = p \bigl( m_3( a_1, a_2, a_3)
  + m_2\left(a_1, h \circ m_2(a_2,a_3)\right) \\ & \quad + m_2\left(h
    \circ m_2(a_1,a_2), a_3 \right) \bigr) \\
  &= p ( m_3(a_1,a_2,a_3) + m_2(a_1,y) + m_2(x,a_3)),
\end{align*}
which, by definition, projects to the Massey product.

In fact, this is the base case for a proof of a similar statement for
order $n$ Massey products defined using the full $A_n$ structure.
The proof of this folk theorem is a straightforward generalization of
that in \cite{lpwz} using the language introduced above.

\subsection{$A_\infty$ Structure on $LCH_\epsilon^*$ and Higher Order
  Linearizations.}

We are finally ready to prove Theorem~\ref{thm:a-infty-high-order},
which states that the $A_n$ structure on $LCH^*_\aug$ is strictly
stronger than the $n^{\text{th}}$-order linearized cohomology.  Before
proving the theorem, however, we need to introduce one more algebraic
object, Stasheff's \dfn{tilde construction} $(\tilde{B}^{n}(V), d^n)$
of an $A_n$ algebra $(V, \m)$ \cite{stasheff:h-space-2}.\footnote{For
  readers familiar with the bar construction, please note that the
  tilde construction is truncated after the order $n$ tensors.}  The
chains of this complex lie in
\[\tilde{B}^n(V) = \bigoplus_{k=1}^n V^{\otimes k},\] while 
the differential $d^n$ is defined componentwise by:
\begin{equation} \label{eqn:bar-diff} d^n|_{V^{\otimes a}} =
  \sum_{i+j+k=a} 1^{\otimes i} \otimes m_j \otimes 1^{\otimes k}.
\end{equation}
That this differential satisfies $(d^n)^2 = 0$ follows from the
defining $A_\infty$ equation \eqref{main-a-infty}.  The reason that we
introduce the tilde construction is the following result. 

\begin{lem} \label{lem:bar} The $n^{\text{th}}$-order linearized
  cochain complex with respect to $\aug$ is the tilde construction of
  $(A^*, \m^\aug)$.
\end{lem}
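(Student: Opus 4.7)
The plan is to verify the equality of the two cochain complexes by matching their underlying graded vector spaces and then checking that their differentials agree term by term after dualization. Both structures have the same ambient space $\bigoplus_{k=1}^{n} (A^*)^{\otimes k}$, since the $n^{\text{th}}$-order chain complex is, by definition, the truncation
\[
A_{(n)} = \bigoplus_{k=1}^{n} A^{\otimes k},
\]
and dualizing termwise gives exactly $\bigoplus_{k=1}^{n}(A^*)^{\otimes k} = \tilde{B}^{n}(A^*)$. So the real content of the lemma is that under this identification the adjoint of $\dfe_{(n)}$ coincides with $d^{n}$ as defined by Equation~\eqref{eqn:bar-diff}.

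To get there, I would first unpack the chain-level differential. Since $\dfe$ is a derivation on $\alg_K$ and $\dfe_0 = 0$ (so no term shortens the tensor length), for $a \in A^{\otimes a}$ the Leibniz rule yields
\[
\dfe(a_1 \otimes \cdots \otimes a_a) = \sum_{l=1}^{a} \sum_{j \geq 1} a_1 \otimes \cdots \otimes \dfe_j(a_l) \otimes \cdots \otimes a_a,
\]
and passing to the quotient defining $A_{(n)}$ simply discards summands whose target tensor power exceeds $n$. Reindexing with $i = l-1$ and $k = a-l$, this shows that the component of $\dfe_{(n)}$ from $A^{\otimes a}$ to $A^{\otimes b}$ is the sum $\sum_{i+k=a-1}\, 1^{\otimes i} \otimes \dfe_{b-a+1} \otimes 1^{\otimes k}$ whenever $a \leq b \leq n$.

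Next I would dualize. Because each $m^\aug_j$ is defined precisely as the adjoint of $\dfe_j$, the adjoint of the displayed component lands as a map $(A^*)^{\otimes b} \to (A^*)^{\otimes a}$ given by $\sum_{i+k=a-1}\, 1^{\otimes i} \otimes m^\aug_{b-a+1} \otimes 1^{\otimes k}$. Relabeling $b \to a$ and $a \to a-j+1$ converts this into the total sum
\[
\sum_{i+j+k=a}\, 1^{\otimes i} \otimes m^\aug_{j} \otimes 1^{\otimes k}
\]
acting on $(A^*)^{\otimes a}$ for $1 \leq a \leq n$, which is precisely $d^n|_{(A^*)^{\otimes a}}$ from Equation~\eqref{eqn:bar-diff}. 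The condition $a \leq n$ on the domain and $a-j+1 \geq 1$ on the codomain are exactly the constraints built into the tilde construction, so nothing escapes the truncation on either side.

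The main obstacle is purely bookkeeping: one has to be careful that the Leibniz-rule expansion of $\dfe$ on $A^{\otimes a}$, when separated by target tensor degree and then dualized, produces exactly the indexed sum in \eqref{eqn:bar-diff} with no extra or missing terms. Once the indices are aligned, the definitions match verbatim and no further argument is required; in particular the identity $(d^n)^2 = 0$ is now guaranteed either by $(\dfe_{(n)})^2 = 0$ or, equivalently, by the $A_\infty$ relations of Proposition~\ref{prop:lch-is-ainfty}.
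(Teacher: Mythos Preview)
Your proposal is correct and follows essentially the same approach as the paper's own proof: identify the underlying graded vector spaces, expand $\dfe_{(n)}$ on $A^{\otimes a}$ via the Leibniz rule (using $\dfe_0=0$ so that no terms shorten), and then dualize to recognize the resulting codifferential as the tilde-construction differential~\eqref{eqn:bar-diff}. Your version simply spells out the index bookkeeping in more detail than the paper does.
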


\begin{proof}
  Clearly, we have that $A_{(n)} = \bigoplus_{i = 1}^n A^{\otimes i}$
  and the differential $\dfe_{(n)}$ is equivalent to the following,
  essentially because of the Leibniz rule and the fact that any term
  of length greater than $n$ becomes zero in $A_{(n)}$:
  \[
  \dfe_{(n)}| = \sum_{i+j+k \leq n } 
  1^{\otimes i} \otimes
  \dfe_j \otimes 1^{\otimes k}.
  \]
  Dually, it is now easy to see that the $n^{\text{th}}$-order
  linearized cochain complex has cochains in $\bigoplus_{i = 1}^n
  (A^*)^{\otimes i}$ and codifferential $\dde_{(n)}$ defined precisely
  as in Equation~\eqref{eqn:bar-diff}.
\end{proof}

It is straightforward to see that $A_n$ morphisms and homotopies
translate to similar notions for the tilde construction (see, for
example, \cite{markl:transfer, smirnov:book}).

\begin{lem} \label{lem:bar-morphism}
  \begin{enumerate}
  \item An $A_n$ morphism $f: (V, \m) \to (W, \n)$ determines a chain
    map $\tilde{B}^nf: \tilde{B}^nV \to \tilde{B}^nW$ whose
    $V^{\otimes a}$ component is
    \[ \sum_r \sum_{i_1 + \cdots + i_r = a} f_{i_1} \otimes \cdots
    \otimes f_{i_r}.\]
  \item An $A_n$ homotopy $h: (V, \m) \to (W, \n)$ between $f$ and $g$
    determines a chain homotopy $\tilde{B}^nh: \tilde{B}^nV \to
    \tilde{B}^nW$ between $\tilde{B}^nf$ and $\tilde{B}^ng$.
  \end{enumerate}
\end{lem}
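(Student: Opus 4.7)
The plan is to read both parts off from Equation~\eqref{a-infty-morphism} and its analogue for homotopies by working componentwise in $\tilde{B}^n$. Everything rests on a single combinatorial fact: the operators $d^n$, $\tilde{B}^n f$, and the $\tilde{B}^n h$ defined below never increase tensor length, since each $m_j$, $n_j$, or $f_{i_\ell}$ collapses its inputs into a single output. Consequently, on any element of $V^{\otimes a}$ with $a \leq n$ the output lives in $\bigoplus_{b \leq a} W^{\otimes b} \subset \tilde{B}^n W$, so no term is killed by the truncation at length $n$ and every instance of the $A_n$ equations I invoke is legitimate.

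For part (1), I would fix $1 \leq a \leq n$ and compare the $W^{\otimes b}$ components of $d^n \circ \tilde{B}^n f$ and $\tilde{B}^n f \circ d^n$ on $v_1 \otimes \cdots \otimes v_a$. In $d^n \circ \tilde{B}^n f$, the inputs first split into $r$ consecutive blocks under a family of maps $f_{i_1}, \ldots, f_{i_r}$, and then an insertion $1^{\otimes p} \otimes n_q \otimes 1^{\otimes s}$ merges $q$ consecutive $f$-outputs; landing in $W^{\otimes b}$ forces $p + 1 + s = b$. In $\tilde{B}^n f \circ d^n$, the inputs are first acted on by an insertion $1^{\otimes p'} \otimes m_{q'} \otimes 1^{\otimes s'}$, and the resulting $V^{\otimes a - q' + 1}$ is then split into $b$ blocks under $f$-maps. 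Grouping both expansions by the distinguished output slot $k \in \{1, \ldots, b\}$ at which the nonlinear merging happens, and by the sizes $j_1, \ldots, j_b$ of the $b$ consecutive blocks of inputs that ultimately feed into each output slot, all the untouched $f$-tensor factors match identically on both sides, and the $k$-th slot reduces term-by-term to the two sides of Equation~\eqref{a-infty-morphism} applied to the distinguished block of size $j_k$. Hence the two compositions agree.

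For part (2), I would define $\tilde{B}^n h$ on $V^{\otimes a}$ by the natural analogue
\[
\tilde{B}^n h\big|_{V^{\otimes a}} = \sum_r \sum_{k=1}^r \sum_{i_1 + \cdots + i_r = a} f_{i_1} \otimes \cdots \otimes f_{i_{k-1}} \otimes h_{i_k} \otimes g_{i_{k+1}} \otimes \cdots \otimes g_{i_r},
\]
modelled on the right-hand side of the $A_n$ homotopy equation stated in the excerpt. The chain homotopy identity $\tilde{B}^n f + \tilde{B}^n g = d^n \circ \tilde{B}^n h + \tilde{B}^n h \circ d^n$ then decomposes exactly as in part (1): for each choice of untouched output slots, together with one distinguished output slot that is either the image of an $m_j$/$n_j$-insertion (on the left of the identity) or the slot holding $h_{i_k}$ (on the right), the sum of the four contributions matches the $A_n$ homotopy equation applied to the inputs feeding into that slot. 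The main obstacle is purely combinatorial bookkeeping: setting up the bijection between partition-and-insertion data on the two sides so that terms group correctly block-by-block. Since we work over $\zz_2$ there are no signs to track, and since tensor lengths only decrease the truncation is invisible, so the verification is a clean reduction to the defining $A_n$ morphism and homotopy equations.
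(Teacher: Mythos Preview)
The paper does not actually prove this lemma; it simply declares the result ``straightforward'' and points to \cite{markl:transfer, smirnov:book}.  Your proposal supplies the standard argument, and part~(1) is correct: the observation that every operator in sight weakly decreases tensor length makes the truncation at level $n$ invisible, and the block-by-block reduction to Equation~\eqref{a-infty-morphism} at the distinguished slot is exactly how functoriality of the (truncated) bar construction is verified.

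Part~(2), however, is undercooked.  When you expand $d^n\circ\tilde{B}^n h + \tilde{B}^n h\circ d^n$, the insertion of $n_q$ (respectively $m_q$) need not occur at the slot carrying $h_{i_k}$: it may land entirely among the $f$-slots to the left of $h$ or the $g$-slots to its right.  These cross terms are not governed by the $A_n$ homotopy equation at all; they cancel in pairs via the $A_n$ \emph{morphism} equations~\eqref{a-infty-morphism} for $f$ and for $g$ separately, exactly as in your part~(1).  Only the terms in which the insertion meets the $h$-slot reduce, via the homotopy equation, to $f_{j_k}+g_{j_k}$ at that slot, flanked by $f$'s on the left and $g$'s on the right.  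You then still need the telescoping identity (over $\zz_2$)
\[
\sum_{k=1}^{b} f_{j_1}\otimes\cdots\otimes f_{j_{k-1}}\otimes\bigl(f_{j_k}+g_{j_k}\bigr)\otimes g_{j_{k+1}}\otimes\cdots\otimes g_{j_b}
= f_{j_1}\otimes\cdots\otimes f_{j_b} \;+\; g_{j_1}\otimes\cdots\otimes g_{j_b}
\]
to recover $\tilde{B}^n f + \tilde{B}^n g$.  Your phrase ``the sum of the four contributions matches the $A_n$ homotopy equation'' hides both the role of the morphism equations in killing the cross terms and this final telescoping step; without them the argument does not close.
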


We are now ready to prove Theorem~\ref{thm:a-infty-high-order}.

\begin{proof}[Proof of Theorem~\ref{thm:a-infty-high-order}]
  We begin by proving that the $A_n$ structure on the linearized
  cohomology determines the $n^{\text{th}}$-order linearized
  cohomology.  Fix an augmentation $\aug$. By the remarks in
  Section~\ref{ssec:min-model}, we know that the $A_n$ structures on
  the linearized cochain complex $(A^*, \delta^\aug)$ and the
  linearized cohomology $LCH^*_\aug$ are $A_\infty$ homotopy
  equivalent. The lemma above then implies that their tilde
  constructions are chain homotopy equivalent, and hence have
  isomorphic cohomologies.  Since the $A_n$ structure on $LCH^*_\aug$
  determines the cohomology of its tilde construction, it also
  determines the cohomology of the tilde construction of the $A_n$
  structure on $A^*$ which, by Lemma~\ref{lem:bar}, is simply
  $LCH^*_\aug(K,n)$.  This proves the first half of the theorem.

  To prove that the $A_n$ structure is strictly stronger, we observe
  that since order $n$ Massey products be used to distinguish the
  Legendrian knots in Theorem~\ref{thm:mir-ex} from their Legendrian
  mirrors, Proposition~\ref{prop:a-infty-massey} and its order $n$
  generalization implies that the $A_n$ structures also distinguish
  these knots.  On the other hand, the higher-order cohomologies can
  never distinguish a Legendrian knot $K$ from its Legendrian mirror
  $\overline{K}$.  To see why, notice that the reflection map $\tau:
  A_{(n)} \to A_{(n)}$ defined by:
  \[
  \tau(a_1 \otimes \cdots \otimes a_n) = a_n \otimes \cdots \otimes
  a_1
  \]
  gives a quasi-isomorphism (but not necessarily a tame isomorphism)
  between $(A_{(n)}, \df_{(n)})$ and $(\overline{A}_{(n)},
  \overline{\df}_{(n)})$.
\end{proof}

\subsection{An Alternative Proof when $n=2$}

In this final section, we present a more down-to-earth proof that, for
a fixed augmentation, the linearized cohomology ring is strictly
stronger than the order $2$ linearized cohomology, i.e.\ the $n=2$
case of Theorem~\ref{thm:a-infty-high-order}.

First, write $A_{(2)}$ as $A \oplus A^{\otimes 2}$. For ease of
exposition, we drop the augmentation $\epsilon$ from the notation. In
this notation, the codifferential can be recorded by:
\begin{equation}
  \dd_{(2)} = \begin{bmatrix}
    \dd & m_2 \\ 0 & \dd^\otimes
  \end{bmatrix},
\end{equation}
which implies the following lemma:

\begin{lem} \label{lem:cup-mapping-cone} The second order linearized
  cochain complex is the mapping cone of the degree $1$ chain map
  $m_2: ((A^*)^{\otimes 2}, \dd^\otimes) \to (A^*, \dd)$.
\end{lem}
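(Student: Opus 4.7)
The plan is to unpack both sides of the asserted identification and observe that they agree termwise. This is essentially a matter of reading off the data from Lemma~\ref{lem:bar} and matching it against the definition of the mapping cone, so the ``hard part'' is really just being careful with the degree conventions and checking that $m_2$ is actually a chain map.

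First I would invoke Lemma~\ref{lem:bar} to identify the second-order linearized cochain complex with the tilde construction $\tilde{B}^2(A^*)$, whose underlying graded vector space is $A^* \oplus (A^*)^{\otimes 2}$. Next I would compute $\dde_{(2)}$ on each summand using Equation~\eqref{eqn:bar-diff}. On the $A^*$ summand the only triple $(i,j,k)$ with $i+j+k=1$, $j \geq 1$ gives $m_1 = \dde$. On the $(A^*)^{\otimes 2}$ summand the triples with $i+j+k=2$ and $j\ge 1$ yield three terms: $(0,2,0)$ contributes $m_2 \colon (A^*)^{\otimes 2} \to A^*$, while $(1,1,0)$ and $(0,1,1)$ together contribute $m_1 \otimes 1 + 1 \otimes m_1 = \dde^{\otimes}$ back into $(A^*)^{\otimes 2}$. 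Collecting these pieces writes $\dde_{(2)}$ as the block matrix displayed just before the lemma statement.

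I would then compare this against the mapping cone of a degree $1$ chain map $f\colon (X,d_X) \to (Y,d_Y)$, which has underlying space $Y \oplus X$ and differential $\left[\begin{smallmatrix} d_Y & f \\ 0 & d_X \end{smallmatrix}\right]$ (working over $\zz_2$ eliminates signs). Setting $X = (A^*)^{\otimes 2}$, $Y = A^*$, $d_X = \dde^{\otimes}$, $d_Y = \dde$, and $f = m_2$ reproduces the matrix from the previous step exactly, so the two complexes coincide term by term.

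The remaining point is to justify that $m_2$ really is a chain map of degree $1$ from $((A^*)^{\otimes 2}, \dde^{\otimes})$ to $(A^*, \dde)$. For this I would specialize the $A_\infty$ relation \eqref{main-a-infty} to $l=2$, which gives
\[
m_1 \circ m_2 + m_2 \circ (m_1 \otimes 1) + m_2 \circ (1 \otimes m_1) = 0,
\]
i.e.\ $\dde \circ m_2 = m_2 \circ \dde^{\otimes}$ over $\zz_2$. Since $m_2$ has degree $1$ by the conventions of Section~\ref{ssec:ainfty}, this is precisely the degree $1$ chain map condition, and the proof is complete. The main obstacle is purely notational bookkeeping; there is no essential geometric or algebraic content beyond Lemma~\ref{lem:bar} and the $l=2$ case of the $A_\infty$ axioms.
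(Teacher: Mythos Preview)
Your proposal is correct and follows the same approach as the paper: the paper's entire argument is to display the block matrix form of $\dde_{(2)}$ on $A^* \oplus (A^*)^{\otimes 2}$ and observe that this is the mapping cone differential. You add welcome detail by routing through Lemma~\ref{lem:bar} and by explicitly verifying via the $l=2$ relation that $m_2$ is a chain map, but the strategy is identical.
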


Associated to this mapping cone we have the standard long exact
sequence:
\begin{equation*}
  \cdots \to  LCH^k(K) \to 
  LCH^k(K,2) \to H^k ((A^*)^{\otimes 2}, \dd^\otimes) \stackrel{d^*}{\to}
  LCH^{k+1}(K) \to \cdots.
\end{equation*}
Since we are working over a field, the K\"unneth formula gives us:
\[
H^{k} ((A^*)^{\otimes 2}, \dd^\otimes)=\bigoplus_{i+j=k} LCH^i\otimes LCH^j.
\]
Moreover, we know that the connecting homomorphism $d^*$ is induced by
$m_2.$ That is, it is given by the cup product $\mu_2$. Again, since
we are working over a field, the short exact sequences into which the
long exact sequence above decomposes must all split. This gives:
\begin{equation}
  LCH^k(K,2) \simeq \ker \mu_2 \oplus 
  \bigl(LCH^k(K) / \img \mu_2 \bigr).
\end{equation}
In other words, the second-order linearization is determined by the
image and kernel of the cup product map on linearized contact
cohomology.  Thus, the linearized cohomology and the cup product
determine the second-order linearized cohomology.

\def\cprime{$'$} \def\polhk#1{\setbox0=\hbox{#1}{\ooalign{\hidewidth
  \lower1.5ex\hbox{`}\hidewidth\crcr\unhbox0}}}
\providecommand{\bysame}{\leavevmode\hbox to3em{\hrulefill}\thinspace}
\providecommand{\MR}{\relax\ifhmode\unskip\space\fi MR }
\providecommand{\MRhref}[2]{%
  \href{http://www.ams.org/mathscinet-getitem?mr=#1}{#2}
}
\providecommand{\href}[2]{#2}


\end{document}